\def\tr{\;\mathrm{tr}\,}
\def\<{\langle}
\def\>{\rangle}
\def\E{{\mathbb E}}
\newcommand{\be}{\begin{equation}}
\newcommand{\ee}{\end{equation}}
      \newtheorem{theorem}{Theorem}[section]
       \newtheorem{corollary}[theorem]{Corollary}
       \newtheorem{lemma}[theorem]{Lemma}
\theoremstyle{remark}
\newtheorem{remark}[theorem]{Remark}
\title{Answer to a question by A. Mandarino, T. Linowski and K. \.{Z}yczkowski}
\author[M. Popa]{Mihai Popa}
\address[M. Popa]{Department of Mathematics\\ University of Texas at San Antonio\\ One UTSA Circle San Antonio\\ Texas 78249, USA\\
	\newline
	 and “Simon Stoilow” Institute of Mathematics of the Romanian Academy\\ P.O. Box 1-764\\ 014700 Bucharest, Romania}
\email{mihai.popa@utsa.edu}
\begin{document}
	\begin{abstract}
	A recent work by  A. Mandarino, T. Linowski and K. \.{Z}yczkowski
	left open the following question. If $ \mu_N $ is a certain permutation of entries of a 
	$ N^2 \times N^2 $ matrix  (``mixing map'') and 
	$ U_N $ is a $ N^2 \times N^2 $ Haar unitary random matrix, then is the family 
	$ U_N, U_N^{\mu_N}, ( U_N^2 )^{\mu_N}, \dots , ( U_N^m)^{\mu_N} $ asymptotically free? (here by 
	$A^{ \mu}$ we understand the matrix resulted by permuting the entries of $ A $ according to the permutation $ \mu $). This paper presents some techniques for approaching such problems. In particular, one easy consequence of the main result is that the question above has an affirmative answer. 
	\end{abstract}
	\maketitle
\section{Introduction}
In the recent years one can notice the emergence of a small but growing body of literature addressing questions connected to permutations of entries of various classes of matrices. For example, partial transposes (see part \ref{subsect:4:1} for a precise definition and more details) appear in connection to Quantum Information Theory in \cite{horod}, \cite{aubrun2}, \cite{fs}; furthermore, distributions of partial transposes of Wishart random matrices are described in \cite{aubrun}, \cite{banica-nechita}, also in \cite{wishart}, \cite{wishart2}; very interesting examples of entry permutations and other linear  transforms on matrix entries are studied in \cite{arizmendi}, \cite{collins-nechita}.
One should also mention the groundbreaking work \cite{adchawo} on random entry permutations on Haar unitary random matrices.

A recent paper in theoretical Physics, \cite{malizy}, authored by  A. Mandarino, T. Linowski and K. \.{Z}yczkowski, left open an interesting question regarding asymptotic free independence of certain random matrices. Briefly, the question is as follows. First, we naturally identify (quotient-remainder) each integer from 
$ 1 $
to 
$ N^2 $ 
to a pair of integers from 
$ 1 $ 
to
$ N $. 
This way, each entry 
$ (i, j) $ 
of a  
$ N^2 \times N^2 $
matrix is described by a $ 4 $-tuple 
$( a, b, c, d)$.
The map 
$ \mu_N $
is defined by permuting the entries via
$ (a, b, c, d) \mapsto (a, c, b, d)$.
We denote by 
$ A^{ \mu_N } $
the matrix obtained by permuting as above the entries of the matrix 
$ A  $. 
With these notations, \cite{malizy} asks if the family
$ U_{N^2} $,
$ U_{N^2}^{ \mu_N } $,
$ ( U_{N^2}^2)^{ \mu_N } $,
$\dots$,
$ ( U_{N^2}^P)^{ \mu_N } $
is asymptotically free,
where 
$ U_{N^2} $
is a $ N^2 \times N^2 $
Haar unitary random variable.

The techniques employed here to answer the question above track back to \cite{popa-gaussian}, where general entry permutations (including the map 
$ \mu_N $ from above) are studied in the framework of Gaussian random matrices. Ideas from \cite{popa-gaussian} 
were further developed in \cite{popa-hao-boolean}, \cite{popa-hao} for matrices with non-commutative entries, in the new and interesting work \cite{au} for Wiegner matrices, in \cite{mpsz1} for Haar unitaries. 
\cite{mpsz1} gives general necessary conditions, for a sequence of permutations 
$ \big(\sigma_N \big)_N $ 
such that
$ U_N^{ \sigma_N } $ 
is  asymptotically circular and free from 
$ \big( U_N \big)_N $;
furthermore, Consequence ... from \cite{mpsz1} shows that 
$ \big(U_{N^2}^{ \mu_N} \big)_N $
is asymptotically circular distributed and free from
$ \big( U_{N^2} \big)_N $, 
thus giving a partial answer to the question from 
\cite{malizy}.
Yet the step from 
$ \big(U_{N^2}^{ \mu_N} \big)_N $
to the family 
$ \big\{  \big( (U_{N^2}^ k )^{ \mu_N} \big)_N :\ k \in \mathbb{N}   \big\} $
seems non-trivial and requires the development of its own combinatorial machinery, as shown in the next pages.

Besides the Introduction, the paper is organized in three sections, as detailed below.

 Section 2, Background and Notations, gives a short review of the main results of \cite{co} and \cite{collins-sniady} regarding integration on the Unitary Group and lists some notation that will be widely used throughout the rest of the paper. The most important is equation (\ref{E:p:q}), which writes the expectation of the normalized trace of an arbitrary product of permuted Haar unitary random matrices and their adjoints as a sum over a family of pairings.
 
  Section 3, Preliminary Results, is  organized in two parts. First part gives a series of inequalities leading to Corollary \ref{cor:pq:2} which establishes that all summands in equation (\ref{E:p:q}) are asymptotically bounded and gives sufficient conditions for the asymptotic vanishing of such summands; second part further details the structure of the asymptotically non-vanishing summands.

Section 4, Main results, gives sufficient conditions for a family 
 $ ( \tau_{1, N})_N $, $ \dots $,
 $ ( \tau_{ P, N})_N $ 
 of sequences of entry permutations such that the family 
of correspondent permuted powers of Haar unitaries
$ \big\{ ( U_N^l )^{ \tau_{k, N}} : \  1 \leq l \leq M, \
1 \leq k \leq P \big\}$ 
is asymptotically free circular and free from 
$ ( U_N )_N $.

Section 5, Particular Cases, shows that the result from Section 4 can be easily applied to the entry permutation introduced in \cite{malizy} (giving an affirmative answer to the question left open there), but also to partial transposes or to a mix of the two.

\section{Background and notation}

\subsection{Review of the Unitary Weingarten Calculus}\label{weingarten}
One of the main results in \cite{collins-sniady} is the following. If 
$ U $ 
is a 
$ N \times N $
Haar unitary matrix, and 
$ u_{i, j} $ 
is its 
$ (i, j) $-entry, then
	\begin{align}\label{eqn:Weingarten}
	\E\left(
	u_{i_1,j_1}u_{i_2,j_2}
	\cdots u_{i_n,j_n}\overline{u_{i^\prime_1,j^\prime_1}}\, 
	\cdots\overline{u_{i^\prime_n,j^\prime_n}}\right)
	=
	\sum_{\sigma,\tau\in S_n}\Big(\prod_{k=1}^n\delta_{i_k,i^\prime_{\sigma(k)}}\delta_{j_k,j^\prime_{\tau(k)}}\Big) 
	\mathrm{Wg}_N (\sigma^{-1}\tau),
	\end{align}
where
$ \mathrm{Wg}_N $
is the unitary Weingarten function.
Furthermore, for
$\sigma\in S_n$ 
with cycle decomposition 
$\sigma=c_1\cdots c_{\#\sigma}$,
and
$ Cat_r $ 
denoting the $r $-th Catalan number, we have that
	\begin{align*}
		\mathrm{Wg}_N ( \sigma )=N^{-2n+\#\sigma}\prod_{i=1}^{\#\sigma} (-1)^{|C_i|-1} Cat_{|c_i|-1}+O(N^{-2n+\#\sigma-2}).
	\end{align*}

We will re-write the equation above using notations more suitable for the computations in the next sections. 
For 
$ m $ 
a positive integer, we shall denote by
$ [ m ] $ 
the ordered set 
$ \{ 1, 2, \dots, m \}$.
and denote by
$ P_2( m) $
the set of pair-partitions on 
$ [ m ]$.
Given a map 
$ \varepsilon : [ m ] \rightarrow \{ 1, \ast \} $,
we shall denote by 
$P_2^{\varepsilon} (m) $ 
the set (possibly void) of pair-partitions on
$ [ m ] $
that connect elements with different values of 
$\varepsilon$:
\[ P_2^{\varepsilon} (m)  = \{ p \in P_2(m):\ \varepsilon(k) \neq \varepsilon(p(k)) \textrm{ for all } k \in [ m ] \}. \]
As showed in \cite{mpsz1}, an equivalent form of equation \ref{eqn:Weingarten}, which is more suited for the computations in the present paper, is described as below:

\begin{align}\label{eqn:Wg:2}
\E \LARGE( 
u_{i_1,j_1}^{\epsilon(1)}
u_{i_2,j_2}^{\epsilon(2)}
\cdots u_{i_{m},j_{m}}^{\epsilon(m)}
\LARGE)
=
\sum_{p,q\in \mathcal{P}^\epsilon_{2}(m)}
\LARGE(
\prod_{k=1}^m\delta_{i_k,i_{p(k)}}\delta_{j_k,j_{q(k)}}
\LARGE)
\mathrm{Wg}_N( p , q ),
\end{align}
where the function
$ Wg_N ( \cdot, \cdot) $ 
is obtained from the unitary Weingarten function as follows. For
$ p, q \in P_2^\varepsilon ( m ) $, 
denote by
$ p \vee q $ 
the supremum of 
$ p $
and 
$ q $
in the lattice of the partitions of 
$ [ m] $ 
(see \cite{nica-speicher} for more details).
More precisely, if  
$ p \vee q $ 
has the block decomposition
$  B_1 B_2 \cdots B_{ | p \vee q |} $,
then 
each of the blocks 
has an even number of elements
$ \{ a_1, a_2, \dots, a_{2s}\}$
with
$ 	a_2=p(a_1),\,a_3=q(a_2),\,a_4=p(a_2),\ldots,a_1=q(a_{2s}) $.
Finally, we define 
 $ \mathrm{Wg}_N (p, q) = \mathrm{Wg}_N ( \sigma ) $.
  In particular, equation (\ref{eqn:Wg:2}) gives that
  \begin{align}\label{w-mult}
\mathrm{Wg}_N(p,q)=N^{-m+|p \vee q |}
\prod_{i=1}^{| p \vee q |} (-1)^{\frac{|B_i|}{2}-1} Cat_{\frac{|B_i|}{2}-1}+
O(N^{-2n+|p \vee q |-2}).
\end{align}

 \subsection{Notations}\label{notations}
 If 
 $ (i_1, i_2, \dots, i_m) $ 
 is a tuple indexed by the ordered set 
 $ [ m ] $
 and 
 $ S \subseteq [ m ] $
 is given by
 $ S = \{ a_1, a_2, \dots, a_s\} $ 
 with
 $ a_1 < a_2 < \dots < a_s $,
 we will use the notations
 $ (i_\nu)_{ \nu \in S} $
 for the $ s$ -tuple 
 $ ( i_{a_1}, i_{a_2}, \dots, i_{a_s})$.

  Throughout the paper, 
 $ U_N $ 
 will denote a
  $ N \times N $
 random Haar unitary matrix.
 If 
  $ \sigma $ 
 is a permutation of the set
    $ [ N ]^2 = \{ (i, j): 1 \leq i, j \leq N \} $,
then
$ U_N^{ \sigma} $ 
will denote the matrix defined by
$ \big[ U_N^{ \sigma} \big]_{ i, j} = \big[ U_N \big]_{ \sigma( i, j) } $.
Furthermore, if
 $ \vartheta \in \{ 1, \ast \} $, 
then we denote 
\begin{align*}
U^{( \sigma, \vartheta )} = \big( U^\sigma \big)^\vartheta=\begin{cases}
&U^{\sigma_s} \mbox{ if } \theta_s=1,\\
&\left(U^{\sigma_s}\right)^\ast \mbox{ if } \theta_s=\ast.
\end{cases}
\end{align*}

Suppose that 
$ n_1, n_2, \dots, n_r $ 
are positive integers,
$ \sigma_1, \sigma_2, \dots, \sigma_r $ 
are permutations on the set 
$ [ N ]^2 $
and 
$ \theta_1, \theta_2, \dots, \theta_r \in  \{ 1, \ast \} $.
The main goal of the following Section is to describe the leading term of the following expression (seen as a function in 
$ N $):
\begin{align*}
\E \circ \tr
\left(
\big[ (U^{n_1})^{\sigma_1}  \big]^{\theta_1}
\cdot
\big[ (U^{n_2})^{\sigma_2}  \big]^{\theta_2}
\cdots
\big[ (U^{n_r})^{\sigma_r}  \big]^{\theta_r}
\right) = 
\E\circ \tr\left(
( U^{ n_1} )^{(\sigma_1,\theta_1)}
\cdots
( U^{ n_r   } )^{(\sigma_{r},\theta_{r})}\right).
\end{align*}

We need to introduce more notations. First, put
\begin{align*}
a^{(\varepsilon)}=\begin{cases}
&a \mbox{ if } \varepsilon =1,\\
&\overline{a} \mbox{ if } \varepsilon =\ast\\
\end{cases}
\hspace{1cm} \textrm{  and } \hspace{1cm}
\varepsilon (i,j)=\begin{cases}
&(i,j) \mbox{ if } \varepsilon =1,\\
&(j,i) \mbox{ if } \varepsilon =\ast.\\
\end{cases}
\end{align*}
Therefore, for 
$ \pi_1, \pi_2 $ 
the canonical projections, 
\begin{align*}
\big[  
( U^{n})^{ ( \sigma, 1 ) }
\big]_{ i, j } = 
\big[ 
U^{ n  } ]_{ \sigma(i, j)}
=   \sum _{\alpha_1, \dots, \alpha_{n-1} \in [ N ]}
u_{ \pi_1 \circ \sigma (i, j), \alpha_1}
u_{ \alpha_1, \alpha_2}
\cdots
u_{ \alpha_{n-2}, \alpha_{n-1} }
u_{\alpha_{n-1}, \pi_2 \circ \sigma(i, j)}
\end{align*}
and
\begin{align*}
\big[
( U^{n } )^{( \sigma, \ast ) }
\big]_{ i, j}
=& 
\overline{ 
	\big[
	( U^{ n })^{ ( \sigma, 1 ) }
	\big]
}_{ j, i}\\
= &
\sum _{\alpha_1, \dots, \alpha_{n-1} \in [ N ]}
\overline {
	u_{ \pi_1 \circ \sigma (j, i), \alpha_1}
}
\cdot
\overline { u_{ \alpha_1, \alpha_2} }
\cdots
\overline { u_{ \alpha_{n-2}, \alpha_{ n -1} } }
\cdot
\overline {
	u_{\alpha_{ n -1}, \pi_2 \circ \sigma(j, i)}
}\\
=&
\sum _{\alpha_1, \dots, \alpha_{n-1} \in [ N ]}
\overline {
	u_{\alpha_{ n -1}, \pi_2 \circ \sigma(j, i)}}
\cdot
\overline { u_{ \alpha_{n-2}, \alpha_{ n -1} } }
\cdots
\overline { u_{ \alpha_1, \alpha_2} }
\cdot
\overline {
	u_{ \pi_1 \circ \sigma (j, i), \alpha_1}
}.
\end{align*}

Let
$ m ( 0 )= 0 $, 
and, for
 $ t =1, \dots, r $,
 let
$ m ( t ) =  m ( t -1) +  n (t )  $,
hence
$ m (r) = m $.
Define
$ \varepsilon: [ m ] \rightarrow \{ 1, \ast\} $
via
$ \varepsilon(s) = \varepsilon_s = \theta_t $
whenever
$ m( t - 1) + 1 \leq s \leq m(t) $.
For
$ t \in [ r]$,
denote by 
$ S_t $ 
the set
$ \{ m(t-1) + 1, m(t_1)+2, \dots, m(t) \} $.
Finally, for
$ i_1, i_2, \dots, i_m \in [ N ] $
with the identification
$ i_{ m + 1} = i_1 $,
define
$ \vartheta( i_1, \dots, i_m) = (k_s, l_s)_{1 \leq s \leq m}$, 
where
  \begin{enumerate}
	\item[$\bullet$] 
	if
	$ s \notin \{ m ( t ), m ( t ) +1:\ 0 \leq t \leq r \}$
	then
	$
	(k_s, l_s) = \epsilon_s ( i_s, i_{s+1} )
	$
	\item[$\bullet$] 
	if
	$ s \in \{ m(t)+1 : \ 0 \leq t \leq r-1 \}  $  
	then
	\begin{align*}
	(k_s, l_s) =
	\left\{
	\begin{array}{ll}
	( \pi_1 \circ 
	\sigma_t(i_{m ( t) +1}, i_{ m ( t + 1 ) +1}  ), i_{m ( t ) +2} )
	& \textrm{ if } \varepsilon_{s} = 1\\
	(i_{m ( t ) +2},
	\pi_2 \circ 
	\sigma_t(i_{m ( t) +1}, i_{m ( t+ 1) +1}  )    
	) &
	\textrm{ if } \varepsilon_s = \ast.
	\end{array}
	\right.
	\end{align*}
	\item[$\bullet$] 
	if
	$ s \in \{ m (t ) : 1 \leq t \leq r \}$,
	then
	\begin{align*}
	(k_s, l_s) =
	\left\{
	\begin{array}{ll}
	(i_{m ( t ) },
	\pi_2 \circ 
	\sigma_t(i_{m ( t -1) +1}, i_{m ( t ) +1}  )
	& \textrm{ if } \varepsilon_{s} = 1\\
	( \pi_1 \circ 
	\sigma_t(i_{m ( t-1) +1}, i_{m ( t ) +1}  ), i_{m ( t ) } )  
	) &
	\textrm{ if } \varepsilon_s = \ast.
	\end{array}
	\right.
	\end{align*}
\end{enumerate}
In particular,  the map
$ \vartheta $
is an injection from
$ [ N ]^m $
to 
$ [ N ]^{ 2m } $.

With these notations, equation (\ref{eqn:Wg:2}) gives that 
 \begin{align}\label{E:p:q}
\E\circ \tr \big(
( U^{ n_1} )^{(\sigma_1,\theta_1)}
\cdot 
( U^{ n_2 } )^{(\sigma_2,\theta_2)} 
\cdots &
( 
U^{ n_r   }
)^{(\sigma_{r},\theta_{r})}\big)\\
= &
\sum_{ i_1, i_2, \dots, i_m \in [ N ] }
\frac{1}{N}
\E 
\big(
u^{ ( \varepsilon_1)}_{ k_1, l_1}
\cdot
u^{ ( \varepsilon_2)}_{ k_2, l_2}
\cdots
u^{ ( \varepsilon_m)}_{ k_m, l_m}
\big) \nonumber\\
=& 
\sum_{p,q\in \mathcal{P}_2^\varepsilon(m)}
\big[
\frac{1}{N}
\mathrm{Wg}_N (p, q) 
\sum_{ i_1, i_2, \dots, i_m \in [ N ] }
\prod_{s=1}^m \delta_{k_s}^{k_{p(s)}}
\delta_{l_s}^{l_{q(s)}}
\big]\nonumber \\
=&
\sum_{p,q\in \mathcal{P}_2^\varepsilon(m)}
\mathcal{E}_N  ( p, q ) \nonumber
\end{align}  
where
\[
\mathcal{E}_N  ( p, q ) 
= \mathrm{Wg}_N( p, q) \cdot 
\frac{1}{N}
| \mathcal{A}_N^{(p, q)} |,
\] 
and
$ \mathcal{A}_N^{(p, q)} $
denotes the set
\begin{align*}
\big\{ \overrightarrow{i}=
(i_1, i_2, \dots, i_m) \in [ N ]^m :\ 
k_s = k_{p(s)} 
\textrm{ and }
l_s= l_{p(s)}
\textrm{ for all }
s = 1, 2, \dots, m 
\big\}.
\end{align*}

Finaly, for
$ S $
 a subset of
  $ [ m ] $, 
define
 \begin{align*}
\mathcal{A}_N^{(p, q)}(S)
& = \big\{
(k_s, l_s)_{ s \in  S} \in [ N ]^{ 2 | S | }: \ 
\textrm{ there exist }
(k_t, l_t )_{ t \in [ m ] \setminus S } \in [ N ]^{ 2 ( m - | S | )}
\textrm{ and }
\overrightarrow{i} \in \mathcal{A}_N^{ (p, q)}\\
 & \hspace{ 9 cm}\textrm{such that }
(k_s, l_s)_{ s\in [ m ]}
= \vartheta ( \overrightarrow{i} ) 
\big\}.
\end{align*}

\section{Preliminary results}

\subsection{Asymptotic boundedness of
	 $ \mathcal{E}_N (p, q)$ }

 \begin{lemma}\label{lemma:3:0}
 For  
 $ S, T \subset [ m ] $,
 denote 
 \[
 D ( S: T ) =
  \{ t \in T : \textrm{there exist some } a, b \in S\ \textrm{such that }  p(a) = q(b) = t  \}.
 \]
 
 If
 $ k \in [ r ] $
and the set
 $ S \subseteq [ m ] $ 
 are such that
 $ D ( S: S_k ) \neq \emptyset $,
 then
 \begin{align*}
 \big| \mathcal{A}_N^{( p, q)} 
 (S \cup S_k ) \big|
   \leq 
  \big| \mathcal{A}_N^{( p, q)} (S) \big|
  \cdot
   N^{ n(k) - | D  ( S: S_k ) |}.
 \end{align*}
 \end{lemma}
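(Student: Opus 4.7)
The plan is to bound the maximum fiber size of the natural projection $\mathcal{A}_N^{(p,q)}(S\cup S_k)\to \mathcal{A}_N^{(p,q)}(S)$ by $N^{n(k)-|D(S:S_k)|}$; the inequality in the lemma then follows by multiplying by $|\mathcal{A}_N^{(p,q)}(S)|$.

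First I would set up the index parametrization on the block $S_k$. From the three cases defining $\vartheta$, the block data $(k_s,l_s)_{s\in S_k}$ is an injective function of the $n(k)+1$ indices $i_{m(k-1)+1},i_{m(k-1)+2},\ldots,i_{m(k)+1}$: the strictly interior positions recover $i_{m(k-1)+2},\ldots,i_{m(k)}$ through the identity $(k_s,l_s)=\varepsilon_s(i_s,i_{s+1})$, while the two endpoint positions recover the pair $\sigma_k(i_{m(k-1)+1},i_{m(k)+1})$, and hence $(i_{m(k-1)+1},i_{m(k)+1})$ itself, since $\sigma_k$ permutes $[N]^2$. Consequently, counting extensions of a fixed $\alpha\in\mathcal{A}_N^{(p,q)}(S)$ amounts to counting $(n(k)+1)$-tuples of underlying indices compatible with $\alpha$ through the pairing constraints.

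Next I would convert each $t\in D(S:S_k)$ into a concrete constraint on those indices. By definition there exist $a,b\in S$ with $p(a)=q(b)=t$, so $k_t=k_a$ and $l_t=l_b$ are fully prescribed by $\alpha$. Tracing through the parametrization, an interior $t$ fixes both $i_t$ and $i_{t+1}$, whereas a boundary $t$ fixes one interior index adjacent to $t$ together with one coordinate of $\sigma_k(i_{m(k-1)+1},i_{m(k)+1})$; the latter, by bijectivity of $\sigma_k$, restricts the boundary pair $(i_{m(k-1)+1},i_{m(k)+1})$ to an $N$-element subset of $[N]^2$.

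For the final count, I would form an auxiliary graph on $n(k)+1$ vertices, namely the $n(k)-1$ interior index vertices together with two additional ``boundary-coordinate'' vertices for $\pi_1\circ\sigma_k$ and $\pi_2\circ\sigma_k$ of the boundary pair, each carrying one scalar degree of freedom. Each $t\in D(S:S_k)$ becomes an edge: an interior $t$ yields the chain edge $\{i_t,i_{t+1}\}$, and a boundary $t$ yields an edge from an interior index vertex to one of the two boundary-coordinate vertices. The resulting edge family is acyclic, since interior edges are sub-intervals of the chain and the two endpoint edges (when both occur) attach to \emph{distinct} boundary-coordinate vertices, as forced by the $\pi_1/\pi_2$ asymmetry in the three cases of $\vartheta$ (swapped by $\varepsilon_s=\ast$). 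A forest with $|D(S:S_k)|\geq 1$ edges spans at least $|D(S:S_k)|+1$ vertices, so that many scalar degrees of freedom are fixed; the free index choices per fiber are thus bounded by $N^{(n(k)+1)-(|D(S:S_k)|+1)}=N^{n(k)-|D(S:S_k)|}$, as claimed.

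The most delicate point is the acyclicity of the auxiliary graph: one must verify that the two possible endpoint constraints in $D$ attach to different boundary-coordinate vertices, otherwise the forest structure could collapse into a $2$-cycle and the edge-cover inequality would fail. This is precisely the content of the explicit $\pi_1/\pi_2$ case split in $\vartheta$; the remaining combinatorics is routine.
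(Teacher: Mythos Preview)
Your argument is correct and in fact cleaner than the paper's: the key observation that the $n(k)$ possible constraints (one per $t\in S_k$) are precisely the edges of a fixed \emph{path} on the $n(k)+1$ scalar vertices $\pi_1\circ\sigma_k,\ i_{m(k-1)+2},\ldots,i_{m(k)},\ \pi_2\circ\sigma_k$ makes acyclicity automatic, and then the forest inequality $|V|\geq |E|+1$ yields the bound uniformly. The paper instead proceeds by explicit case analysis: it treats $n(k)=1$ separately, then for $n(k)>1$ splits according to whether $D(S:S_k)$ meets the interior of $S_k$ and whether it contains one or both endpoints $m(k-1)+1,\ m(k)$, counting determined indices by hand in each sub-case. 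Both arguments ultimately exploit the same facts (bijectivity of $\sigma_k$ for the boundary pair; consecutive interior indices pinned by each interior $t\in D$), but your graph encoding unifies the cases and makes transparent why the $\pi_1/\pi_2$ split at the two endpoints is exactly what prevents a cycle.

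One small point to tidy: your description of a boundary $t$ as ``an edge from an interior index vertex to one of the two boundary-coordinate vertices'' tacitly assumes $n(k)\geq 2$. When $n(k)=1$ the unique element of $S_k$ is simultaneously first and last, and the single edge joins the two boundary-coordinate vertices directly (there being no interior vertex). This still fits the forest count, but deserves a sentence; the paper handles $n(k)=1$ as its own one-line case.
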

 \begin{proof}
 Suppose first that 
 $ n(k) = 1 $.
 Then 
 $ S_k $
  has only one element,
 $ m(k) = p(a)=q (b) $
 for some 
 $ a, b \in S $,
 so
 $ k_{ n(k)} = k_a $
 and
 $ l_{ n(k)} = l_b $,
 hence
 $ ( k_{m(k)}, l_{m(k)}) $
 is uniquely determined by
 $ ( k_s, l_s)_{ s \in S } $, 
 which gives
 $  \big| \mathcal{A}_N^{( p, q)} (S \cup S_k ) \big| = 
 \big| \mathcal{A}_N^{( p, q)} (S) \big| $.
  
   Suppose now that 
   $ n(k) > 1 $. 
   In this case, elements of 
   $ \mathcal{A}_N^{(p, q)}(S \cup S_k ) $
    are obtained by concatenated elements of 
    $ \mathcal{A}_N^{(p, q)}(S) $
    and tuples of the form
   $ (k_t , l_t )_{ m(k-1)+1 \leq t \leq m(k)} $ 
   which are uniquely determined by the
   $ ( n(k) + 1)$-tuple
   $ (i_{m(k-1)+1}, i_{m(k-1)+2}, \dots, i_{m(k) +1})$.
   
   If 
   $ D( S:S_k) \cap ( S_k \setminus \{ m(k-1) +1, m(k) \}) = \emptyset $,
   then 
   $ m(k-1) +1 \in D( S: S_k) $
   or
   $ m(k) \in D (S:S_k )$.
   
    If both 
    $ m(k-1) + 1 $ 
    and
    $ m(k) $
    are elements of 
    $ D( D:D_k) $, 
    then the $ 4$-tuple\\
     $ (k_{m(k-1)+1 }, l_{m(k-1) + 1}, k_{m(k)},
     l_{m(k)} ) $
     is uniquely determined by 
     $ ( k_s, l_s)_{ s \in S } $.
   Since 
   $ \sigma_k $ 
   is bijective, it follows that
    $ ( i_{ m(k-1)+1}, i_{m(k)+1} )$
    and
    $ i_{m(k-1)+2} $ and
    $ i_{m(k)} $
    are, in turn,
    uniquely determined by
    $ (k_{m(k-1)+1 }, l_{m(k-1) + 1}, k_{m(k)},
         l_{m(k)} ) $.
    So, for each
         $ ( k_s, l_s)_{s\in S } $,
         there are at most
         $ n(k)+1 - 3 $ possible choices (since 
      $ m(k-1) +2$  may equal $ m(k) $)
      for the tuple
       $ (i_{m(k-1)+1}, i_{m(k-1)+2}, \dots, i_{m(k) +1})$, 
  and the conclusion follows.
  
  If 
  $ m(k-1)+ 1 \notin D( S: S_k) $
  but
  $ m(k)  \in D( S:S_k) $, 
  then
   $ (k_{m(k)}, l_{m(k)} $ 
   are uniquely determined by
   $ (k_s, l_s)_{s\in S} $.
   That is
    $ i_{m(k)} $ 
    and one component of 
   $ \sigma_k\circ \theta_k (i_{ m(k-1) +1}, i_{m(k) +1}) ) $
   are uniquely determined by
   $ (k_s, l_s)_{s\in S} $.
   Therefore, there are at most
   $ N $
    possible choices for the couple 
   $ (i_{ m(k-1) +1}, i_{m(k) +1}) ) $
   and at most one choice for 
   $ i_{m(k)} $,
   and the conclusion follows.
    The case 
    $ m(k-1) +1 \in D( S:S_k) $
    and
    $ m(k) \notin D(S;S_k) $
    is similar.
    
    Finally, if 
    $ n(k) > 1 $
    and
  $ D( S:S_k) \cap ( S_k \setminus \{ m(k-1) +1, m(k) \}) \neq \emptyset $, 
  note first that whenever
  $ t 
  \in D( S: S_k) \cap ( S_k \setminus \{ m(k-1) +1, m(k) \}) $
  we have that
  $ i_{t} $
  and
  $ i_{t+1} $
  are uniquely determined by
  $ (k_s. l_s)_{s\in S} $.
  Indeed, 
  $ i_t = k_t = k_a $
  and
  $ i_{t+1} = l_t = l_b $
  for some 
  $ a, b \in S $ 
  with
  $ p(a) = q(b) = t $.
  Therefore, 
  at least
  $ | D(s;s_k) \cap (S \setminus \{ m(k-1) +1, m(k)\}) | + 1 $
  elements of the 
  $ (n(k)-1)$-tuple
   $ ( i_t )_{ t \in (S \setminus \{ m(k-1) +1, m(k) \})} $
   are uniquely determined by
   $ ( k_s, l_s)_{s\in S} $.
   Furthermore, as shown above, for each
   $ (k_s, l_s)_{s \in S} $
   for the couple
   $ ( i_{m(k-1)+1, m(k)+1}) $
   there are at most
   $ N $ 
   choices if only one of
   $ m(k-1)+1 $
   and 
   $ m(k) $
   are in 
   $ D( S:S_k) $,
  respectively one one choice
  if both are in $ D(S:S_k) $.
 \end{proof}
\begin{corollary}\label{cor:pq:3}
 Suppose that 
 $ S \subseteq [ m ] $
 and
 $ k \in [ r ] $
 are such that there exist some
 $ a \in S $ and
 some 
 $ t \in S_k $
 with
 $ p(a) = t $
 or 
 $ q(a) = t $.
 Then
  \begin{align*} 
    \big|     \mathcal{A}_N^{( p, q)} (S \cup S_k )  \big|
         \leq 
         N^{ n(k) - |  D (S: S_k )  | } 
         \cdot
   \big|      \mathcal{A}_N^{( p, q)} ( S  ) \big|.
         \end{align*}
\end{corollary}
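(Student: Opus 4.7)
The plan is to reduce to Lemma \ref{lemma:3:0} when possible and handle the residual case by hand. The hypothesis of the corollary is strictly weaker than that of Lemma \ref{lemma:3:0}: the latter requires both a $p$-preimage and a $q$-preimage in $S$ of some common $t \in S_k$, whereas here we only demand a preimage under one of the two maps. So I split into two cases.

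First, if $D(S:S_k) \neq \emptyset$, then Lemma \ref{lemma:3:0} applies directly and yields the stated bound. The substantive case is $D(S:S_k) = \emptyset$, where the inequality to prove reduces to $|\mathcal{A}_N^{(p,q)}(S \cup S_k)| \leq N^{n(k)} \cdot |\mathcal{A}_N^{(p,q)}(S)|$. By hypothesis there exist $a \in S$ and $t \in S_k$ with $p(a) = t$ or $q(a) = t$; by symmetry I assume $p(a) = t$. From the definition of $\mathcal{A}_N^{(p,q)}$ we have $k_t = k_{p(a)} = k_a$, so $k_t$ is uniquely determined by the restriction $(k_s, l_s)_{s \in S}$ for every $\overrightarrow{i}$ giving rise to such a restriction.

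It remains to count, for each fixed $(k_s, l_s)_{s \in S} \in \mathcal{A}_N^{(p,q)}(S)$, the number of admissible extensions to $S \cup S_k$. Arguing as in the proof of Lemma \ref{lemma:3:0}, every such extension is determined by the choice of $(i_{m(k-1)+1}, \ldots, i_{m(k)+1})$, giving an a priori count of $N^{n(k)+1}$ tuples. The constraint $k_t = k_a$ saves one factor of $N$: if $t$ is interior to $S_k$, then $k_t$ coincides with either $i_t$ or $i_{t+1}$ (according to $\varepsilon_t$), pinning down one index; if $t$ is one of the boundary positions $m(k-1)+1$ or $m(k)$, then either a single interior index ($i_{m(k-1)+2}$ or $i_{m(k)}$) is fixed, or one coordinate of $\sigma_k(i_{m(k-1)+1}, i_{m(k)+1})$ is fixed, which restricts the pair $(i_{m(k-1)+1}, i_{m(k)+1})$ to at most $N$ values by injectivity of $\sigma_k$. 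In every subcase the number of extensions is at most $N^{n(k)}$, yielding the desired inequality.

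The main obstacle is the boundary bookkeeping in the residual case; this is parallel to the reasoning in Lemma \ref{lemma:3:0}, with a single-sided constraint (only via $p$, or only via $q$) now playing the role of the two-sided constraint exploited there.
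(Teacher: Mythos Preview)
Your proof is correct and follows essentially the same approach as the paper's: both split on whether $D(S:S_k)$ is empty, defer the nonempty case to Lemma~\ref{lemma:3:0}, and in the empty case argue that the single constraint $k_t=k_a$ (or $l_t=l_a$) removes one degree of freedom from the $(n(k)+1)$-tuple $(i_{m(k-1)+1},\dots,i_{m(k)+1})$, either by pinning an interior index or by fixing one component of $\sigma_k(i_{m(k-1)+1},i_{m(k)+1})$ and invoking injectivity of $\sigma_k$. The case analysis and the conclusions match.
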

\begin{proof}
If 
$ D(S:S_k) \neq \emptyset$,
then the result follows trivially from
Lemma \ref{lemma:3:0}. 
Suppose then that
$ D(S:S_k) = \emptyset$.

As seen before, the elements of
    $ \mathcal{A}_N^{(p, q)}(S \cup S_k ) $
     are obtained by concatenated elements of 
     $ \mathcal{A}_N^{(p, q)}(S) $
     and tuples of the form
    $ (k_t , l_t )_{ t \in S_k} $, 
    which are uniquely determined by the
    $ ( n(k) + 1)$-tuple
    $ (i_{m(k-1)+1}, i_{m(k-1)+2}, \dots, i_{m(k) +1})$.
  
 If 
 $ p(a) = b $ 
 for 
  $ a \in S $ 
  and 
  $ b \in S_k $, 
 then  
 $ k_b = k_a $ 
 is uniquely determined by the tuple
  $ ( k_s, l_s)_{s\in S } $.
  But
  $ k_b $ 
  is either an element of 
  $ \{ i_t : m(k-1) +1 < t < m(k)+1\} $
  or it is a component of
  $ \sigma_k \circ \theta_k 
  ( i_{ m( k-1) +1}, i_{m(k)} ) $.
  If 
  $ k_a = i_t $ 
  for some 
  $t $ 
  with
  $ m( k-1) +1 < t < m(k) + 1 $, 
  then for each 
  $ 2 | S | $-tuple 
  $ ( k_s, l_s)_{s \in S } $
  there are at most
   $ N^{ n(k)} $ 
   choices for the 
   $( n(k) +1)$-tuple
   $ (i_t)_{ m(k-1) +1 \leq t \leq m(k) + 1 } $.
   If
   $ k_a $
   is a component of 
   $ \sigma_k \circ \theta_k 
     ( i_{ m( k-1) +1}, i_{m(k)} ) $
     then, since
     $ \sigma_k $ 
     is bijective, for each
     $ ( k_s, l_s)_{s\in S } $
     there are at most
      $ N $
       choices for the couple
       $ i_{m(k-1) +1}, i_{m(k)+1} $
 hence at most
  $ n(k) $ 
  choices for the 
$( n(k) +1)$-tuple
   $ (i_t)_{ m(k-1) +1 \leq t \leq m(k) + 1 } $.
   
   The case 
   $ q(a) = b $
   is similar.
\end{proof}
The argument from the proof of Corollary \ref{cor:pq:3} above will further give the result below, that we will use in the next subsection.

\begin{lemma}\label{lemma:TQ}
Suppose that 
$ S \subset [ m ] $ 
and that 
$ k \in [ r ] $ 
is such that 
$ n (k ) > 1 $.
For each
$ \nu \leq n(k) -1 $, 
consider the sets
\begin{align*}
T_\nu &= \{ m(k-1) + 1, m(k-1) + 2, \dots, m(k-1) + \nu \} \\
Q_\nu &= \{m(k)-\nu +1, m(k)-\nu +2, \dots,  m(k)\}. 
\end{align*}
With this notations, we have that
\begin{enumerate}
\item[(i)] If
$ \theta_k =1 $ 
and
$ p(a) = m( k -1) + 1 $
for some
$ a \in S $,
or if
$ \theta_k = \ast $
and
$ q(a) = m(k-1) + 1 $ 
for some
$ a \in S $,
then
\[
\big| \mathcal{A}_N^{(p, a)} (S \cup T_\nu ) \big|\big| \leq 
N^{\nu - | D( S: T_{\nu})|}
\cdot
\big| \mathcal{A}_N^{(p, q)} (S) \big|.
\]
\item[(ii)] If
$ \theta_k = 1 $ 
and 
$ q(b) = m(k) $
for some
$ b \in S $,
or if
$ \theta_k = \ast $
and
$ p(b) = m(k) $
for some 
$ b \in S $, 
then
\[
\big| \mathcal{A}_N^{(p, q)} (S \cup Q_\nu) \big|
 \leq 
N^{ \nu - |D(S:Q_\nu)|}
\cdot
\big| \mathcal{A}_N^{(p, q)} (S) \big|
\]
\end{enumerate}
\end{lemma}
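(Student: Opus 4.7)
The plan is to extend the counting argument from the proof of Corollary~\ref{cor:pq:3}, replacing the full block $S_k$ by a prefix $T_\nu$ (in case (i)) or a suffix $Q_\nu$ (in case (ii)). I focus on (i); case (ii) follows by the symmetric argument in which the roles of $m(k-1)+1$ and $m(k)$ are swapped.

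I fix a tuple $(k_s, l_s)_{s\in S}$ coming from $\mathcal{A}_N^{(p,q)}(S)$ and bound the number of compatible extensions to $(k_s, l_s)_{s \in T_\nu}$. From the definition of $\vartheta$, these extensions depend on $i_{m(k-1)+1}, i_{m(k-1)+2}, \dots, i_{m(k-1)+\nu+1}$ together with $i_{m(k)+1}$; the first and the last of these enter only through $\sigma_k(i_{m(k-1)+1}, i_{m(k)+1})$, and only through the single ``boundary coordinate'' it supplies to $(k_{m(k-1)+1}, l_{m(k-1)+1})$. The hypothesis of (i) is crafted precisely so that this boundary coordinate is already prescribed by some $k_a$ or $l_a$ with $a\in S$; hence varying the pair $(i_{m(k-1)+1}, i_{m(k)+1})$ over choices compatible with this constraint produces no new $(k_s,l_s)_{s\in T_\nu}$, and the admissible extensions are parameterized by the $\nu$-tuple $(i_{m(k-1)+2}, \dots, i_{m(k-1)+\nu+1})$, yielding the crude bound $N^\nu$.

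To obtain the factor $N^{-|D(S:T_\nu)|}$, I use that each $t \in D(S:T_\nu)$ pins both $k_t$ and $l_t$ via $(k_s,l_s)_{s\in S}$. The key combinatorial step is the construction of an injection $\phi: D(S:T_\nu) \to \{m(k-1)+2, \dots, m(k-1)+\nu+1\}$ given by $\phi(m(k-1)+1) = m(k-1)+2$ and $\phi(m(k-1)+j) = m(k-1)+j+1$ for $2\le j\le \nu$, together with the verification that each $\phi(t)$ indexes an $i$-variable pinned by the constraint at $t$: the boundary case $t = m(k-1)+1$ pins $l_t = i_{m(k-1)+2}$, while for $t = m(k-1)+j$ with $j\ge 2$ the pair $(k_t, l_t)$ pins both $i_{m(k-1)+j}$ and $i_{m(k-1)+j+1}$, in particular $i_{\phi(t)}$. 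Therefore at least $|D(S:T_\nu)|$ of the $\nu$ free indices are pinned, so there are at most $N^{\nu-|D(S:T_\nu)|}$ extensions; summing over $(k_s,l_s)_{s\in S}$ completes (i), and (ii) follows by the analogous construction on $Q_\nu$.

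The step I expect to be the main obstacle is the careful handling of the boundary element $m(k-1)+1$ (resp.\ $m(k)$ in (ii)): it behaves differently from interior elements of $T_\nu$ because its defining formula passes through $\sigma_k$ rather than a plain reading of $(i_s, i_{s+1})$, so one must verify that the pinning already granted by the hypothesis and the additional pinning coming from the boundary element possibly belonging to $D(S:T_\nu)$ combine correctly, with no saving double-counted or missed. Once this bookkeeping is in hand, the injection $\phi$ transparently converts each element of $D(S:T_\nu)$ into a pinned $i$-variable and the claimed bound follows at once.
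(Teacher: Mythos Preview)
Your proposal is correct and follows essentially the same approach as the paper. The only cosmetic difference is that the paper parameterizes the extension directly by the $\nu$-tuple $(l_t)_{t\in T_\nu}$ (respectively $(k_t)_{t\in T_\nu}$ when $\theta_k=\ast$), using the relation $l_t=k_{t+1}$, and then observes that the hypothesis pins $k_{m(k-1)+1}$ while each $t\in D(S:T_\nu)$ pins $l_t$; your parameterization by $(i_{m(k-1)+2},\dots,i_{m(k-1)+\nu+1})$ and the injection $\phi(t)=t+1$ is the same bookkeeping in different coordinates, since $l_t=i_{t+1}$ on $T_\nu$ when $\theta_k=1$.
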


\begin{proof}
It suffices to show part (i); part (ii) follows then by taking adjoints.

As stated before, elements of 
$ \mathcal{A}_N^{(p, q)} (S \cup T_\nu) $
are concatenations of $ 2 | S | $-tuples from 
$\mathcal{A}_N^{(p, q)} (S) $ 
and
$ 2\nu $-tuples of the form 
$( k_t, l_t)_{ t \in T_\nu } $.

If
$ \theta_k =1 $, 
then
$ l_{t} = k_{t+1} $
for all
$ m(k-1)+1 \leq t \leq m(k)-1 $, 
so 
$(k_t, l_t)_{ t \in T_\nu} $
is uniquely determined by
$ k_{m(k-1)+1}$
and the
$ \nu $-tuple
$(l_t)_{t \in T_\nu }$.
Since
$ p(a) = m(k-1) +1 $, 
we have that
$ k_{ m(k-1) +1} = k_a $.
Furthermore, if
$ t \in D ( S:T_\nu)$, 
then
$ l_t = l_d $ for some 
$ d \in S $.
It follows that each tuple
 $(k_t, l_t)_{ t \in S \cup T_\nu}$
 from
 $ \mathcal{A}_N^{(p, q)}( S \cup T_\nu )$
 is uniquely determined by its components
 $ ( k_t, l_t)_{t\in S} $
 and
 $ (l_t )_{ t \in T_\nu \setminus D( S: T_\nu)} $, 
 hence the conclusion.
 
If
$ \theta_k =\ast$, the argument is similar.
Now
$ k_t = l_{t+1} $
for all
$ m(k-1)+1 \leq t \leq m(k)-1 $,
so
$(k_t, l_t)_{ t \in T_\nu} $
is now uniquely determined by
 $ l_{m(k-1)+1}$
and the
$ \nu $-tuple
$(k_t)_{t \in T_\nu }$. 
The condition
$ q(a) = m(k-1)+1 $
gives  that
$ l_a = l_{ m(k-1)+1 }$
and, as above, if
$ t \in D ( S:T_\nu)$, 
then
$ k_t = k_d $ for some 
$ d \in S $.
It follows that each tuple
 $(k_t, l_t)_{ t \in S \cup T_\nu}$
 from
 $ \mathcal{A}_N^{(p, q)}( S \cup T_\nu )$
 is uniquely determined by its components
 $ ( k_t, l_t)_{t\in S} $
 and
 $ (k_t )_{ t \in T_\nu \setminus D( S: T_\nu)} $, 
 hence the conclusion.
\end{proof}

 \begin{lemma}\label{lemma:3:1}
 For  
 $ S, T \subset [ m ] $,
 denote
 \[ 
 B (S:T ) = \{ B \textrm{ block in } p \vee q : B \subseteq S \cup T 
 \textrm{ and } B \nsubseteq S  \}. 
\]
 
 If 
 $ S_{k-1} \subseteq S $,
 or if
 $ S_{k+1} \subseteq S $
 then
  \begin{align*}
 \big| \mathcal{A}_N^{( p, q)} (S \cup S_k ) \big|
   \leq 
  \big| \mathcal{A}_N^{( p, q)} (S) \big|
   \cdot N^{ n(k) -
    | B ( S: S_k ) | }.
  \end{align*}
 \end{lemma}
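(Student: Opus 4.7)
I would prove Lemma \ref{lemma:3:1} by a refinement of the counting arguments used in Lemma \ref{lemma:3:0} and Corollary \ref{cor:pq:3}, exploiting the additional rigidity provided by the boundary condition $S_{k-1}\subseteq S$ (the case $S_{k+1}\subseteq S$ follows by the same argument after reversing orientation or taking adjoints). The first step is to observe that $S_{k-1}\subseteq S$ determines the index $i_{m(k-1)+1}$ uniquely from the $S$-data. Indeed, according to the definition of $\vartheta$, the values $(k_{m(k-2)+1},l_{m(k-2)+1})$ and $(k_{m(k-1)},l_{m(k-1)})$ together encode both coordinates of $\sigma_{k-1}(i_{m(k-2)+1},i_{m(k-1)+1})$, so bijectivity of $\sigma_{k-1}$ determines the pair $(i_{m(k-2)+1},i_{m(k-1)+1})$, and in particular its second entry.

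Once $i_{m(k-1)+1}$ is fixed, an extension of a given tuple $(k_s,l_s)_{s\in S}$ in $\mathcal{A}_N^{(p,q)}(S)$ to an element of $\mathcal{A}_N^{(p,q)}(S\cup S_k)$ is determined by the choice of the $n(k)$ remaining indices $(i_{m(k-1)+2},\dots,i_{m(k)+1})$, subject to the block constraints of $p\vee q$. The plan is to show that each block $B\in B(S:S_k)$ produces a \emph{distinct}, independent equation on these indices, and hence cuts the count by a factor of $N$. This would immediately yield the bound $N^{n(k)-|B(S:S_k)|}$.

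To extract the equation associated to a block $B\in B(S:S_k)$, traverse its $p,q$-cycle $a_1,a_2,\dots,a_{2s}$. Since $B\subseteq S\cup S_k$, the cycle of equalities $k_{a_1}=k_{a_2}$, $l_{a_2}=l_{a_3},\ldots,l_{a_{2s}}=l_{a_1}$ can be fully propagated using either the known $S$-values or the explicit form of $\vartheta$ on $S_k$, namely $(k_t,l_t)=\theta_k(i_t,i_{t+1})$ for middle elements and the $\sigma_k$-boundary formulas at $t=m(k-1)+1$ and $t=m(k)$. The hypothesis $B\nsubseteq S$ guarantees that at least one of the propagated equalities must involve an index from $(i_{m(k-1)+2},\dots,i_{m(k)+1})$: either it equates such an index to a quantity already determined by the $S$-data (when $B\cap S\ne\varnothing$), or, when $B\subseteq S_k$, the closure of the cycle forces a non-trivial equation relating two of the unknown indices. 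In each case one obtains a valid reduction by a factor of $N$, mirroring the constraint-counting performed for middle elements and boundary elements in the proof of Lemma \ref{lemma:3:0}.

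The main obstacle is proving that the equations extracted from distinct blocks are \emph{independent}, i.e.\ that one can legitimately multiply the resulting $N$-factors together. The danger is that two different blocks could provide equations which collapse onto the same index or onto the same component of some $\sigma_k$-coordinate — this is precisely where the two boundary elements $m(k-1)+1$ and $m(k)$ create bookkeeping difficulties, since each of them encodes only \emph{one} coordinate of $\sigma_k(i_{m(k-1)+1},i_{m(k)+1})$. The resolution will be to process blocks in an order that visits boundary elements last, invoking bijectivity of $\sigma_k$ to combine two half-coordinate equations into one equation fixing $i_{m(k)+1}$ (exactly as in the last subcase of the proof of Lemma \ref{lemma:3:0}) while ensuring that each remaining block contributes a new equation on a fresh unknown. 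A case analysis by the position of $B\cap S_k$ inside $S_k$, closely modelled on the proof of Corollary \ref{cor:pq:3}, then yields the desired bound.
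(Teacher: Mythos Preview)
Your strategy is more elaborate than necessary, and the complication stems from missing one structural observation that the paper exploits. Since every $t\in S_k$ has the same $\varepsilon$-value $\theta_k$, and $p,q\in P_2^\varepsilon(m)$ only pair elements with \emph{different} $\varepsilon$-values, neither $p$ nor $q$ can match two elements of $S_k$. Two consequences follow immediately. First, the case ``$B\subseteq S_k$'' that you single out is vacuous: no block of $p\vee q$ can lie entirely inside $S_k$. Second, for any block $B\in B(S:S_k)$ and any $t\in B\cap S_k$, both $p(t)$ and $q(t)$ lie in $B\setminus S_k\subseteq S$; hence $t\in D(S:S_k)$. Distinct blocks contribute distinct such elements, so $|D(S:S_k)|\geq |B(S:S_k)|$.

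This inequality reduces the lemma to the bound
\[
\big|\mathcal{A}_N^{(p,q)}(S\cup S_k)\big|\leq N^{\,n(k)-|D(S:S_k)|}\,\big|\mathcal{A}_N^{(p,q)}(S)\big|,
\]
which is exactly Lemma~\ref{lemma:3:0} when $D(S:S_k)\neq\emptyset$, and which, when $D(S:S_k)=\emptyset$ (hence $B(S:S_k)=\emptyset$ as well), is just the statement that there are at most $N^{n(k)}$ extensions --- immediate from your own first step, since $i_{m(k-1)+1}$ is already determined by the $S$-data. The hypothesis $S_{k-1}\subseteq S$ is used only for this last, easy case.

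In particular, the ``independence of equations'' problem you identify as the main obstacle disappears entirely: once the question is pushed from $B(S:S_k)$ to $D(S:S_k)$, the counting in Lemma~\ref{lemma:3:0} already handles everything, and no block-by-block processing order or boundary bookkeeping is needed. Your outline could probably be pushed through, but the case analysis and the independence argument you sketch are doing work that the $\varepsilon$-constraint on $p,q$ makes unnecessary.
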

 
 \begin{proof}
 
  It suffices to prove the result for the case
   $ S_{k-1} \subseteq S $;
   the case
    $ S_{k +1} \subseteq S $ 
    will follow by taking adjoints.
    
    Also, since 
       $ \varepsilon_t = \theta_k $
       for all 
        $ t \in S_k $,
        it follows that 
        $p $ and $ q $
        do not pair elements of 
        $ S_k $
        with other elements from 
        $ S_k $. 
        Therefore the set
        $D( S:S_k)  $ 
        has more elements than
         $ B (S : S_k ) $.
         It suffices then to show that
         \begin{align} \label{eq:D}
\big| \mathcal{A}_N^{( p, q)} (S \cup S_k )  \big|
         \leq 
         N^{ n(k) - |  D (S: S_k )  | } 
         \cdot
\big| \mathcal{A}_N^{( p, q)} ( S  ) \big| .
         \end{align}
         
    If 
    $ D( S:S_k) \neq \emptyset $
    then
    (\ref{eq:D}) 
    follows from Lemma \ref{lemma:3:0} above.
     Therefore suffices to show that if
     $ D(S:S_k) = \emptyset $
  then
   \begin{align}\label{simpl:1}
\big| \mathcal{A}_N^{( p, q)} (S \cup S_k ) \big|
 \leq 
N^{ n(k) } 
\cdot
\big| \mathcal{A}_N^{( p, q)} ( S  ) \big|.
   \end{align}      
    
   Fix 
   $ (k_s, l_s )_{s\in S } \in \mathcal{A}_N^{(p, q)}(S) $,
   i.e. all
   $ i_t $ 
   are uniquely determined for
    $ m ( k - 1) + 1 \leq t \leq m(k)+1 $.                                                                                                 
     The elements of 
     $ \mathcal{A}_N^{(p, q)}(S \cup S_k ) $
      are obtained by concatenated elements of 
      $ \mathcal{A}_N^{(p, q)}(S) $
      and tuples of the form
     $ (k_t , l_t )_{ t \in S_k} $, 
     which are uniquely determined by the
     $ ( n(k) + 1)$-tuple
     $ (i_{m(k-1)+1}, i_{m(k-1)+2}, \dots, i_{m(k) +1})$.
     But, as stated above, 
     $ i_{ m(k-1) + 1 } $ 
     is uniquely determined for each
      $ (k_s, l_s )_{s\in S } \in \mathcal{A}_N^{(p, q)}(S) $,
      so
      (\ref{simpl:1}) follows.
\end{proof}
\begin{corollary}\label{cor:pq:2}
${}$
\begin{enumerate}
\item[(i)]  $ \mathcal{E}_N (p, q) = O(N^0)= O(1) $
for any
 $ p, q \in P_2^{ \varepsilon}(m) $.
 \item[(ii)] With the notations from above, suppose that
 $ S = S_{ \alpha_1} \cup S_{\alpha_2 }
 \cup \dots \cup S_{\alpha_t}$
 for some
 $ \alpha_1, \dots, \alpha_t \in [ r ] $.
 If
   $ p, q \in P_2^{ \varepsilon}(m ) $
 are such that
 \begin{align}\label{eq:S:1}
 \big|\mathcal{A}_N^{ ( p, q)} (S) \big|
 = o \big( N^{ 1 + | S| 
 - | \{ B \textrm{ block in } p \vee q : \ B \subseteq S 
 \} | } \big)
\end{align}
then
$ 
\mathcal{E}_N ( p, q) = o(N^0)=o(1)$.
 
 \item[(iii)] If $ p \vee q $ has a block with more than one element in some 
 $ S_k $,
 then  \\
 $   \mathcal{E}_N (p, q) =  O(N^{-1})$.
\end{enumerate}
\end{corollary}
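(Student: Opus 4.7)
Since $\mathrm{Wg}_N(p,q)=O(N^{-m+|p\vee q|})$ by (\ref{w-mult}), every assertion reduces to a matching upper bound on $|\mathcal{A}_N^{(p,q)}|$. The plan is to build $[m]$ as a nested chain $S^{(1)}\subsetneq S^{(2)}\subsetneq\cdots\subsetneq S^{(r)}=[m]$ with $S^{(j)}=S^{(j-1)}\cup S_{k_j}$, traversing the indices $k_1,\ldots,k_r$ in cyclic order on $[r]$, and to invoke Lemma \ref{lemma:3:1} (or, in part (iii), Lemma \ref{lemma:3:0}) at each addition.

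For part (i), I would fix any $k_0\in[r]$, set $S^{(1)}=S_{k_0}$ and $k_j\equiv k_0+j-1\pmod r$ for $j\geq 2$. The trivial estimate $|\mathcal{A}_N^{(p,q)}(S_{k_0})|\leq N^{n(k_0)+1}$ and Lemma \ref{lemma:3:1} applied at each subsequent step (legal since $S_{k_j-1}=S_{k_{j-1}}\subseteq S^{(j-1)}$) telescope to
\begin{equation*}
|\mathcal{A}_N^{(p,q)}|\leq N^{\,m+1-\sum_{j\geq 2}|B(S^{(j-1)}:S_{k_j})|}.
\end{equation*}
Since $\varepsilon$ is constant on each $S_k$ but alternates within every block of $p\vee q$, no block can lie entirely inside $S_{k_0}$; hence every block is ``completed'' at exactly one step $j\geq 2$, and $\sum_{j\geq 2}|B(S^{(j-1)}:S_{k_j})|=|p\vee q|$. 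Multiplying by $N^{-1}\mathrm{Wg}_N(p,q)$ then yields $\mathcal{E}_N(p,q)=O(1)$.

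For part (ii), I would initialize the same cyclic procedure with the hypothesized $S=S_{\alpha_1}\cup\cdots\cup S_{\alpha_t}$. The complement $[r]\setminus\{\alpha_1,\ldots,\alpha_t\}$ decomposes into cyclic arcs, and processing each arc starting at an endpoint adjacent to $S$ preserves the hypothesis of Lemma \ref{lemma:3:1} at every addition. Only blocks not already contained in $S$ can be completed during the iteration, so the block savings sum to $|p\vee q|-b_S$ with $b_S=|\{B:B\subseteq S\}|$. Combining with the assumed estimate $|\mathcal{A}_N^{(p,q)}(S)|=o(N^{1+|S|-b_S})$ produces $|\mathcal{A}_N^{(p,q)}|=o(N^{m+1-|p\vee q|})$, whence $\mathcal{E}_N(p,q)=o(1)$.

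For part (iii), let $B^\ast$ be a block of $p\vee q$ with $|B^\ast\cap S_{k^\ast}|\geq 2$. I would run the procedure of part (i) with starting index $k_0=k^\ast+1$, so that $S_{k^\ast}$ becomes the last index added. At this final step every element of $B^\ast$ outside $S_{k^\ast}$ already lies in $S^{(r-1)}$, and since $\varepsilon$ is constant on $S_{k^\ast}$ the elements $p^{-1}(t)$ and $q^{-1}(t)$ also lie in $S^{(r-1)}$ for each $t\in B^\ast\cap S_{k^\ast}$; hence every such $t$ belongs to $D(S^{(r-1)}:S_{k^\ast})$. Summing $|B\cap S_{k^\ast}|$ over all completed blocks $B\in B(S^{(r-1)}:S_{k^\ast})$ and invoking $|B^\ast\cap S_{k^\ast}|\geq 2$ gives $|D(S^{(r-1)}:S_{k^\ast})|\geq|B(S^{(r-1)}:S_{k^\ast})|+1$. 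Replacing Lemma \ref{lemma:3:1} by Lemma \ref{lemma:3:0} at this last step (whose hypothesis $D\neq\emptyset$ is now satisfied) therefore saves one additional factor of $N$, producing $|\mathcal{A}_N^{(p,q)}|\leq CN^{m-|p\vee q|}$ and hence $\mathcal{E}_N(p,q)=O(N^{-1})$. The principal combinatorial obstacle is verifying that each block of $p\vee q$ is counted exactly once across the iteration and that in part (iii) the extra ``$+1$'' contributed by $B^\ast$ is not absorbed by deficits elsewhere; both are consequences of the fact that the cyclic traversal completes every block precisely at the step where its last constituent $S_k$ is added.
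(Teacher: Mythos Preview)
Your proposal is correct and follows essentially the same route as the paper: build $[m]$ by adjoining the $S_k$ one at a time in cyclic order, use the trivial bound $|\mathcal{A}_N^{(p,q)}(S_{k_0})|\le N^{n(k_0)+1}$ at the start, apply Lemma~\ref{lemma:3:1} at every subsequent step (which is legal because the previous $S_{k_j-1}$ is already in the union), and for part~(iii) swap in Lemma~\ref{lemma:3:0} at the final step to harvest the extra factor of $N$ from $|D|>|B|$. The only cosmetic differences are that the paper fixes $k_0=1$ rather than allowing an arbitrary start, in part~(ii) it simply picks the smallest missing index rather than speaking of cyclic arcs, and in part~(iii) it performs a cyclic relabelling to make the special $S_k$ last rather than choosing the starting point accordingly; none of this changes the argument. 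One small sharpening: in your part~(iii) the observation ``$\varepsilon$ constant on $S_{k^\ast}$ forces $p(t),q(t)\in S^{(r-1)}$'' actually holds for \emph{every} $t\in S_{k^\ast}$, not just those in $B^\ast$, so $D(S^{(r-1)}:S_{k^\ast})=S_{k^\ast}$ outright, which makes the inequality $|D|\ge|B|+1$ immediate.
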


\begin{proof}
 Any element from 
 $ \mathcal{A}_N^{(p, q)} ( S_1 ) $
 is uniquely determined by 
 $ i_1, i_2, \dots, i_{m_1+1} $,
 hence
 \begin{align*}
 \big| \mathcal{A}_N^{(p, q)} ( S_1 )  \big|
 \leq N^{ m_1+1}.
 \end{align*}
  Applying Lemma \ref{lemma:3:1} $ r -1 $ times, we obtain that
  \begin{align*}
  |\mathcal{A}_N^{ (p, q)} | \leq N^{m_1+1} \cdot
  \prod_{t=2}^r N^{
  n_t  - | B ( \cup_{ \nu =1}^{ t-1} S_{ \nu} : S_t) |}
  = N^{m +1 - \sum_{t=2}^r | B( S_1 \cup \dots \cup S_{t-1} : S_t) | }.
  \end{align*}
  Since 
  $ p, q  \in P_2^{\varepsilon}(m )$
  no blocks of
   $ p \vee q $
   are contained in 
   $ S_1 $.
    It follows that 
     $ p \vee q $
   is the disjoint union of the sets
   $ 
    B( S_1 \cup \dots \cup S_{t-1} : S_t)  
  $
  for
  $ 2   \leq t \leq r-1   $,
  so 
   \begin{align*}
   | \mathcal{A}_N^{(p, q)} |  \leq 
   N^{m +1 - | p \vee q  | }.
   \end{align*} 
 
 Since 
 $ \textrm{Wg}_N(p,q)= O(N^{-m+|p \vee q |})$,
 we have that
 \begin{align*}
 \mathcal{E}_N  ( p, q ) 
 = \textrm{Wg}_N( p, q) \cdot 
 \frac{1}{N}
 | \mathcal{A}_N^{(p, q)} | \leq
  \textrm{Wg}_N( p, q) \cdot N^{m - | p \vee q  | }
  = O(1),
 \end{align*}
 so part (i) follows.
 
 To show part (ii), let 
 $ \beta = \min\{ t \in [r]: t \neq \alpha_\nu , 1\leq \nu \leq p \} $
 and
 $ S^\prime = S \cup S_\beta $.
 Lemma \ref{lemma:3:1} gives that
 \begin{align*}
\big|  \mathcal{ A}_N^{ ( p, q)} ( S^\prime ) \big|
 \leq 
\big|  \mathcal{ A }_N^{ ( p, q)} ( S ) \big|
  \cdot N^{ n(\beta) - | B ( S: S_{ \beta}) |  }
  = o \big(
  N^{1 + | S | + n( \beta ) - 
  | \{ B \in \pi_{p, q} :\ B \subseteq {S} \} |
   - | B ( S: S_{ \beta }) | } 
   \big)
 \end{align*}
 But
 $ | S^\prime | = | S | + | S_\beta | = | S | + n(\beta) $
 and the blocks of 
 $ p \vee q $
 contained in 
 $ S^\prime $
 are either contained in $ S $
 or elements of 
 $ B ( S: S_{\beta }) $.
 Hence
 $ S^\prime $ 
 satisfies equation (\ref{eq:S:1}).
 Iterating, it follows that 
 $ [ m ] $ satisfies (\ref{eq:S:1}), so
$ | \mathcal{A}_N^{(p, q)} |  =
o \big( 
   N^{1 + m  - | p \vee q | } \big) $, 
   therefore
\begin{align*}
\mathcal{E}_N ( p, q) = \textrm{Wg}_N( p, q) \cdot 
o \big( N^{m - | p \vee q |  } \big)
  = o(N^{0} ).
\end{align*}
 
 For part (iii), suppose that 
 $ B $ is a block of 
 $ p \vee q $
 such that
 $ S_k $ 
 contains more than one element of 
 $ B $.
 Via a circular permutation of the sets 
 $ S_1, S_2, \dots, S_r $, 
 we can suppose without loss of generality that
 $ k =r $.
 
 Denote 
 $ S = S_1 \cup \dots \cup S_{r-1} $.
  As seen above, Lemma \ref{lemma:3:1} gives that
  \begin{align*}
\big|  \mathcal{A}_N^{(p, q)}(S) \big| 
\leq 
  N^{ | S | +1 - \sum_{t=2}^{r-1}
  | B ( S_1 \cup \dots \cup S_{t-1}: S_t ) | },
   \end{align*}
and using again
    $ \displaystyle | p \vee q | =
    \sum_{t =2}^r 
    | B ( S_1 \cup \dots S_{t-1}: S_t ) |  $,
 the equation above reads
 \begin{align}\label{eq:B:r}
\big|  \mathcal{F}_N^{(p, q)}(S ) \big|
 \leq N^{ | S | + 1 - |  p \vee q | 
 + | B( S:  S_r ) |}.
 \end{align}
 
 On the other hand, since there is a block of 
 $ p \vee q  $
 with more than one element on 
 $ S_r $, 
 we have that the set 
 $ D ( S: S_r ) $
 has strictly more elements than
  $ B (S: S_r ) $, 
  so Lemma \ref{lemma:3:0} gives that
  relations 
  \begin{align*}
   | \mathcal{A}_N^{(p, q)} | 
   \leq 
  N^{1 + | S | + n(r) - | p \vee q |+ | B (S:S_r) |
  - | D ( S: S_r ) |  }
  \leq 
  N^{ m + 1 - | p \vee q |},
   \end{align*}
henceforth
  \begin{align*}
   \mathcal{E}_N(p, q) \leq \textrm{Wg}_N (p, q) \cdot 
   N^{m+1 - | p \vee q |-1 } = O(N^{-1}).
  \end{align*}
\end{proof}

\subsection{Structure of the asymptotically non-vanishing summands}

\begin{lemma}\label{lemma:structure}
Let $ p, q \in P_2^{\epsilon}(m) $.
Then either
$ \mathcal{E}_N (p, q) = O(N^{-1})$
or for each block 
$ B $ 
of 
$ p \vee q $,
with
$ B = \{ a_1, a_2, \dots, a_h \} $,
and
$ a_1 < a_2 < \dots < a_h $,
we have that  the following properties are satisfied by all
$ s \in [ h ] $,
with the identification
$ a_{ h + 1 }= a_1 $:
\begin{itemize}
\item[$\bullet$]$ \varepsilon_{ a_s } \neq \varepsilon_{ a_{ s + 1 } } $
\item[$\bullet$] $ \{ p(a_s), q(a_s) \} = \{ a_{  s - 1 }, a_{ s + 1 } \} $.
\end{itemize}
\end{lemma}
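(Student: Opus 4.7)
The plan is to prove the contrapositive: if at least one of the two bullets fails for some block of $p\vee q$, then $\mathcal{E}_N(p,q)=O(N^{-1})$. By Corollary \ref{cor:pq:2}(iii), we may assume from the outset that every block of $p\vee q$ contains at most one element of each $S_k$ (otherwise we are already done). Next, because $p,q\in P_2^\varepsilon(m)$, every $p$- or $q$-pair has endpoints of opposite $\varepsilon$; hence if the first bullet fails at some index $s$, i.e.\ $\varepsilon_{a_s}=\varepsilon_{a_{s+1}}$, then $a_{s+1}\notin\{p(a_s),q(a_s)\}$ and so the second bullet fails at $s$ as well. It therefore suffices to analyse the case in which the second bullet fails for some block $B=\{a_1<a_2<\dots<a_h\}$ at some index $s$.

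\textbf{Main strategy.} The goal is to refine the generic bound $|\mathcal{A}_N^{(p,q)}|\le N^{m+1-|p\vee q|}$ established in Corollary \ref{cor:pq:2}(i) by saving one additional factor of $N$, since together with $\mathrm{Wg}_N(p,q)=O(N^{-m+|p\vee q|})$ this yields $\mathcal{E}_N(p,q)=O(N^{-1})$ as desired. Using the cyclic invariance of the trace, relabel so that $a_h\in S_r$. Construct $[m]$ inductively as $S_1\subset S_1\cup S_2\subset\dots\subset[m]$, applying Lemma \ref{lemma:3:1} at steps $t<r$ and Lemma \ref{lemma:3:0} at the last step $t=r$. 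The block identity $|p\vee q|=\sum_{t=2}^r|B(S_1\cup\dots\cup S_{t-1}:S_t)|$ then shows, exactly as in the proof of Corollary \ref{cor:pq:2}(i), that it is enough to exhibit at some step either a strict inequality $|D(S:S_k)|>|B(S:S_k)|$ (to invoke Lemma \ref{lemma:3:0} with a one-unit gain) or the sharper boundary estimate of Lemma \ref{lemma:TQ} with an analogous strict excess.

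\textbf{Where the defect comes from.} The failure of the second bullet at $s$ means the alternating $p$/$q$-walk enumerating $B$ does not agree with the natural integer order $a_1<a_2<\dots<a_h$ (even up to reversal and cyclic shift). Tracing this walk, one sees that somewhere in the induction either (a) an element $a_j$ with $j<h$ already has both its $p$- and $q$-partners in $S_{<k_j}$ — this places $a_j$ in $D(S_{<k_j}:S_{k_j})$ even though the block $B$ is not yet closed (so $B\notin B(S_{<k_j}:S_{k_j})$), producing the needed strict inequality; or (b) $a_h\in D(S_{<r}:S_r)$ is accompanied by a boundary defect at a position of the form $m(k-1)+1$ or $m(k)$ that Lemma \ref{lemma:TQ} converts into an extra factor of $N^{-1}$. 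In both cases one obtains $|\mathcal{A}_N^{(p,q)}|\le N^{m-|p\vee q|}$ and hence the asserted decay.

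\textbf{Main obstacle.} The hardest step is to prove rigorously that the failure of the second bullet \emph{necessarily} produces one of the two types of defect above; small examples (e.g.\ $h=4$ with the pairings $p\colon a_1\!\leftrightarrow\!a_3,\,a_2\!\leftrightarrow\!a_4$ and $q\colon a_1\!\leftrightarrow\!a_2,\,a_3\!\leftrightarrow\!a_4$, which fails both bullets but satisfies $|D|=|B|$ block-by-block on singleton segments) show that the extra factor of $N^{-1}$ sometimes does not come from a single block's $D$-vs-$B$ discrepancy but rather from a global boundary interaction captured by Lemma \ref{lemma:TQ}. Handling these edge cases cleanly — showing that the alternating-walk constraint together with the injectivity of the map $\vartheta$ leaves no room to avoid both types of defect simultaneously — is the combinatorial core of the argument.
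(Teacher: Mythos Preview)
Your overall plan---contrapositive, reduce the first bullet to the second, cyclic shift, then look for a strict inequality $|D|>|B|$ to gain a factor of $N^{-1}$---matches the paper's approach. However, you explicitly leave the ``main obstacle'' open, and that is precisely where the proof lives; as written this is a sketch, not a proof.

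The missing idea is that the cyclic shift must be chosen with care. You write ``relabel so that $a_h\in S_r$,'' but the correct choice is to relabel so that the element $a_{s+1}$ \emph{at which property \textup{(3.6)} fails} becomes the last element of $B$. After this shift the new $a_h$ is the old $a_{s+1}$ and the new $a_{h-1}$ is the old $a_s$; since $b(1)<b(2)<\dots<b(h)$ (each $S_k$ meets $B$ at most once), all of $a_1,\dots,a_{h-2}$ lie in $S:=S_1\cup\dots\cup S_{b(h-1)-1}$. The failure $a_h\notin\{p(a_{h-1}),q(a_{h-1})\}$ now forces both $p(a_{h-1})$ and $q(a_{h-1})$ into $\{a_1,\dots,a_{h-2}\}\subseteq S$, so $a_{h-1}\in D(S:S_{b(h-1)})$, while the block $B$ is not in $B(S:S_{b(h-1)})$ because $a_h$ is still outside $S\cup S_{b(h-1)}$. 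Under the standing assumption from Corollary~\ref{cor:pq:2}(iii), the usual injection $B(S:S_k)\hookrightarrow D(S:S_k)$ then misses $a_{h-1}$, giving $|D|\ge|B|+1$. One application of Lemma~\ref{lemma:3:0} at this step, together with Lemma~\ref{lemma:3:1} at all other steps, yields $|\mathcal{A}_N^{(p,q)}|\le N^{m-|p\vee q|}$ and hence $\mathcal{E}_N(p,q)=O(N^{-1})$.

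In particular, your disjunction (a)/(b) is unnecessary: case (a) always occurs once the shift is chosen this way, and Lemma~\ref{lemma:TQ} plays no role here. Your $h=4$ example illustrates this perfectly. In the natural order you correctly observe $|D|=|B|$ at every step. But property (3.6) fails at $s=2$ (since $\{p(a_2),q(a_2)\}=\{a_1,a_4\}\not\ni a_3$), so one should shift so that $a_3$ becomes the last element. In the new order $a_4,a_1,a_2,a_3$, at the step where $a_2$ is added one finds $p(a_2)=a_4$ and $q(a_2)=a_1$ both already present while $a_3$ is not, giving $|D|=1>0=|B|$. The ``edge cases'' you feared do not arise.
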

\begin{proof}
It suffices to show that  either
$ \mathcal{E}_N (p, q) = O( N^{-1} ) $
or every
 $ s \in [ h ] $ 
 satisfies the following property (again, we identify 
 $ a_{h +1} = a_1 $):
  \begin{align}\label{eq:c2:02}
   a_{s+1} \in \{ p(a_s), q(a_s)\}.
  \end{align}
  
For every
    $ t \in [ h ]$, 
define 
   $ b (t ) $
via 
   $ a_t \in S_{b(t) } $. 
From part (iii) of Corollary \ref{cor:pq:2}, we
can suppose that 
$ b(t_1) \neq b(t_2) $
   whenever
   $ t_1 \neq t_2 $ 
(otherwise 
$ \mathcal{E}_N (p, q) = O(N^{-1}) $). 

Suppose that (\ref{eq:c2:02}) is not satisfied for some 
$ s \in [ h ] $. 
 Via a circular permutation of the sets 
 $ S_1, S_2, \dots, S_r $
 we can further assume that
 $ b(s+1) = r $.
 Moreover, using part (ii) of Corollary \ref{cor:pq:2}, we can further assume that 
 $ S_r $
contains only one element, that is 
 $ a_h $,
of 
 $ B $,
so $ s = h -1 $.
 
To simplify the notions, let
$ b = b(h-1) $
and let 
   $ S = S_1 \cup S_2 \cup \dots \cup S_{b -1}$.
The condition
   $ a_1 < a_2 < \dots < a_p $
gives that
$ B \setminus \{ a_h \} \subseteq S $.
Next, since are under assumption that 
$ h -1 $ 
does not satisfy (\ref{eq:c2:02}), we have that
$ p(a_{h-1}), q( a_{h-1}) \in B \setminus \{ a_h\} $,
that is
$ a_{ h -1 } \in D( S: S_b ) $.
On the other hand, 
$ a_{h-1} \notin B( S: S_b) $,
because
$ a_h \in B (S \cup S_b ) $.
  Lemma 
   \ref{lemma:3:0} gives then
   \begin{align*}
\big|   \mathcal{A}_N^{ ( p, q)} (S \cup S_{b}) \big|
\leq
 \big|  \mathcal{A}_N^{ ( p, q)} (S ) \big|
   \cdot
    N^{  n( b ) - | D( S : S_{b} )|}
    < 
 \big| \mathcal{A}_N^{ ( p, q)} (S )  \big|
       \cdot
        N^{  n( b) - | B( S : S_{b} )|}.
   \end{align*}
   
Finally, as in the proof of Corollary \ref{cor:pq:2}, an application of Lemma \ref{lemma:3:1} gives
  \begin{align*}
   \mathcal{E}_N  ( p, q ) 
   = \textrm{Wg}_N( p, q) \cdot 
   \frac{1}{N}
   | \mathcal{A}_N^{(p, q)} | \leq
    \textrm{Wg}_N( p, q) \cdot N^{m - | p \vee q | -1 }
    = O(N^{-1}).
   \end{align*} 
\end{proof}
 \begin{lemma}\label{lemma:nc}
  Suppose that 
  $ p, q \in P_2^{\varepsilon}(m) $
  are such that there exist 
  $ a, b, c, d \in [ m ] $
with the following properties:
   \begin{enumerate}
   \item[$\bullet$] $ a < b < c < d $
   \item[$\bullet$] $ c \in \{ p(a), q(a)\} $ 
   and
   $ d \in \{ p(b), q(b ) \} $
   \item[$\bullet$] $ a \in S_{\alpha} $,
   $ b \in S_x $,
    $ c \in S_{ \beta } $
    and
    $ d \in S_y $
    with
    $ \alpha < x <  \beta < y $.
  \end{enumerate} 
  Then 
 $ 
 \mathcal{E}_N^{ ( p, q)} = O( N^{-1} ) $.
  \end{lemma}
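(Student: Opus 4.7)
My plan to prove Lemma \ref{lemma:nc} follows the template of the proof of Lemma \ref{lemma:structure}: I would sharpen the generic bound on $|\mathcal{A}_N^{(p,q)}|$ by one factor of $N$, upgrading the trivial $\mathcal{E}_N(p,q) = O(1)$ from Corollary \ref{cor:pq:2}(i) to $O(N^{-1})$. As a preliminary reduction, Corollary \ref{cor:pq:2}(iii) lets me assume that no $S_k$ contains more than one element of any single block of $p \vee q$, and by cyclicity of the trace I may circularly permute $S_1, \ldots, S_r$; I normalize so that $y = r$.

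The saving of one factor of $N$ is meant to come from a single step in the iterative buildup $S \to S \cup S_k$. At that step I would apply Lemma \ref{lemma:3:0} and show $|D(S:S_k)| > |B(S:S_k)|$ strictly. Under the reduction above, each block appearing in $B(S:S_k)$ contributes exactly one element to $D(S:S_k)$, so strict inequality is witnessed by an element $t \in S_k$ with $p(t), q(t) \in S$ whose block extends outside $S \cup S_k$.

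The natural candidate is $k = \beta$ together with $S = S_1 \cup \cdots \cup S_{\beta - 1}$: by hypothesis $a, b \in S$, $c \in S_\beta$, and $d \in S_y = S_r$ lies outside $S \cup S_\beta$. The pair $(b, d)$ certifies that the block $B_2$ of $b$ exits $S \cup S_\beta$ through $S_y$. When the block $B_1$ of $a$ coincides with $B_2$, the element $c$ itself is the witness: its block reaches $d \in S_y$, and provided the other neighbor of $c$ in the block (the one distinct from $a$) lies in $S$, one has $c \in D(S:S_\beta)$ while $c$'s block is not counted in $B(S:S_\beta)$. When $B_1 \neq B_2$ and $B_2 \cap S_\beta \neq \emptyset$, the same mechanism produces a witness inside $B_2 \cap S_\beta$.

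The main obstacle is the residual case $B_1 \neq B_2$ with $B_2 \cap S_\beta = \emptyset$, most notably when both blocks are minimal of size two ($B_1 = \{a,c\}$ and $B_2 = \{b,d\}$). In that situation no single application of Lemma \ref{lemma:3:0} delivers the required $N^{-1}$, and I expect to have to invoke the hypothesis of Corollary \ref{cor:pq:2}(ii) for a carefully chosen union $S = S_\alpha \cup S_x \cup S_\beta \cup S_y$. The extra ingredient needed is that the crossing $\alpha < x < \beta < y$ together with the crossing of the pair partitions $(a,c)$ and $(b,d)$ creates a cycle in the graph of index identifications $i_t = i_{t'}$, forcing one additional coincidence among the $i$-indices compared with the non-crossing count and thereby verifying $|\mathcal{A}_N^{(p,q)}(S)| = o(N^{1 + |S| - |\{B \text{ block of } p \vee q : B \subseteq S\}|})$. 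Making this cycle argument precise, possibly via Lemma \ref{lemma:TQ} or a direct combinatorial bookkeeping of the $\vartheta$-chain, is the technically most delicate part of the proof.
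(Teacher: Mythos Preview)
Your approach has a genuine gap precisely where you flag it. In the minimal case $B_1=\{a,c\}$, $B_2=\{b,d\}$ with $B_2\cap S_\beta=\emptyset$, no step $S\to S\cup S_\beta$ along a monotone buildup $S_1,\dots,S_{\beta-1}$ yields $|D(S:S_\beta)|>|B(S:S_\beta)|$: the only relevant element $c\in S_\beta$ has its whole block $\{a,c\}\subseteq S\cup S_\beta$, so it is counted in both $D$ and $B$. Your fallback ``cycle argument'' on $S_\alpha\cup S_x\cup S_\beta\cup S_y$ is not an argument yet; in particular there is no cycle of $i$-identifications forced by a single $p$- or $q$-connection $a\leftrightarrow c$ and $b\leftrightarrow d$, since each such connection fixes only one of $k$ or $l$ at its endpoints, and the $\vartheta$-chain does not close up without further input.

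The paper's proof obtains the saving at $S_x$, not at $S_\beta$, and the key device is a \emph{non-monotone} buildup of $S$. One first forms $S'=S_1\cup\cdots\cup S_{x-1}$, then \emph{jumps} to $S_\beta$ via Corollary~\ref{cor:pq:3} (legitimate because $a\in S'$ is $p$- or $q$-connected to $c\in S_\beta$), and then fills in $S_{\beta-1},\dots,S_{x+1}$ by Lemma~\ref{lemma:3:1}. At this point $S$ contains both $S_{x-1}$ and $S_{x+1}$, so $S_x$ is surrounded: both boundary indices $i_{m(x-1)+1}$ and $i_{m(x)+1}$ are already determined by $(k_s,l_s)_{s\in S}$. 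Consequently $(k_b,l_b)$ is determined by $(k_s,l_s)_{s\in S\cup S_x\setminus\{b\}}$, whence $|\mathcal{A}_N^{(p,q)}(S\cup S_x)|=|\mathcal{A}_N^{(p,q)}(S\cup S_x\setminus\{b\})|$ and the right-hand side is controlled by an $(n(x)-1)$-tuple of free internal indices. Since $d\notin S\cup S_x$, the block of $b$ is not counted in $B(S:S_x)$, so this step produces exactly the extra factor $N^{-1}$ needed for Corollary~\ref{cor:pq:2}(ii). The moral: the saving is not a $D$-versus-$B$ discrepancy at $S_\beta$, but the ``free'' determination of $(k_b,l_b)$ once $S_x$ has been sandwiched between already-built pieces.
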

\begin{proof}
Let
 $ S^\prime = S_1 \cup S_2 \cup \dots \cup S_{x-1} $.
 As in the proof of  part (i) of Corollary \ref{cor:pq:2}, we have that
 \begin{align*}
\big| \mathcal{A}_N^{ (p, q)} ( S^\prime ) \big|
  \leq
  N^{| S^\prime | +1 - | \{ B \textrm{ block in } p \vee q : B \subseteq S \} | }.
  \end{align*}
Henceforth, Corollary   \ref{cor:pq:3}
 gives that 
 \begin{align}\label{eq:S:4}
\big| \mathcal{A}_N^{( p, q)} ( S^\prime \cup S_{\beta}) \big|
 \leq
\big| \mathcal{A}_N^{(p, q)} (S^\prime ) \big|
  \cdot
  N^{ | S_ \beta| -| B (S^{\prime}: S_{ \beta}) | }
  \leq 
  N^{| S^\prime \cup S_{\beta} | + 1  
  - | \{ B \in p \vee q : \ 
  B \subseteq ( S^\prime \cup S_{\beta })\}}.
  \end{align} 
  
Denote
  $ S = S^\prime \cup S_{\beta} \cup S_{\beta-1}
  \cup \dots \cup S_{ x+ 1 } $.  
and note that
$ d \notin S $
from the assumption 
  $ \beta < y $.  
Equation (\ref{eq:S:4}) and Lemma \ref{lemma:3:1} give then  
  \begin{align}\label{eq:S:5}
 \big|  \mathcal{A}_N^{ ( p, q)}(S) \big|
  \leq
    N^{ | S | + 1 - | \{ B \in p \vee q : \ B \subseteq S \} | }
    \end{align}
    
  If 
  $ n(x) = 1 $
  then
  $ S_x $
  has a unique element
   $ b = m(x) $
   and, since 
   $ b $
   is in the same block with
   $ d \notin S $,
   we have that
   $ B (S:S_x ) = \emptyset $.  
On the other hand, each tuple
   $ ( k_s, l_s)_{ s \in S } $,
 uniquely determines the pair
 $ (  i_{ m(x) }, i_{m(x)+1 } ) $,
so it also  uniquely determines the pair
   $ (  k_{ m(x)}, l_{m(x)} ) = 
    \sigma_x \circ \theta_x  ( i_{ m(x)}, i_{ m(x) + 1} ) $. 
The inequality (\ref{eq:S:5}) gives then
    \begin{align*}
 \big|   \mathcal{A}_{N}^{ ( p, q)} ( S \cup S_x )\big|
    = 
 \big| \mathcal{A}_N^{(p, q)} (S)  \big|
    \leq
    N^{ | S \cup S_x | - | \{ B \in p \vee q : \ B \subseteq ( S \cup S_x ) \} | }
    \end{align*}
and the conclusion follows from part (ii) 
of Corollary \ref{cor:pq:2}.  

Next, suppose that
$ n(x) \geq 2 $, 
i.e.
$ S_x \setminus \{ b \} \neq \emptyset $.

If
$ b = m(x-1) + 1 $,
respectively if
$ b = m (x ) + 1 $
then, from the formulae describing the map
$ \vartheta $,
the couple
$ ( k_b, l_b ) $
is uniquely determined by the triples
$ ( i_{ m(x-1) + 1}, i_{ m (x-1) + 2}, i_{ m(x) + 1 } ) $
respectively    
$ ( i_{ m(x-1) + 1}, i_{ m (x) }, i_{ m(x) + 1 } )$.
Also, if
   $ m ( x -1) + 2 < b < m ( x ) + 1 $,
then
   $( k_b, l_b ) = \epsilon_b ( i_b, i_{b+1})$,
which is uniquely determined by
   $ (k_{b-1}, l_{b-1} )$ 
and 
   $ ( k_{b+1}, l_{b+1}) $.
Therefore
    $ ( k_b, l_b) $
 is uniquely determined by 
$ ( k_s, l_s)_{ s \in S\cup S_x\setminus \{ b \}}  $.   
So
     \begin{align*}
\big|     \mathcal{A}_N^{(p, q)} ( S \cup S_x ) \big|
= 
 \big| \mathcal{A}_N^{(p, q)} ( S \cup S_x \setminus \{ b \})\big|.
     \end{align*}
     
On the other hand, since
$ b $
is in the same block of 
     $ p \vee q $
 as 
     $ d \notin S $,
 we have that\\
  $ B ( S: S_x ) = B ( S : S_x \setminus \{ b \}) .$
According to Corollary \ref{cor:pq:2}(ii), it suffices to show that
\begin{align}\label{sx:1}
\big| \mathcal{A}_N^{(p, q)} (S \cup S_x \setminus \{b \} )
\big|
\leq
\big|\mathcal{A}_N^{(p, q)}(S) \big|
\cdot
N^{ n(x) - 1 - | D( S: S_x \setminus \{ b \} | }.
\end{align}
  
  As seen above, the couple
   $ ( i_{ m( x -1) + 1}, i_{m(x) + 1 }) $
   is uniquely determined by
   $ ( k_s, l_s)_{s \in S }$.
   So the tuple
   $ (k_t, l_t)_{ t \in S_x \setminus \{b\}} $
   is uniquely determined by 
   $ ( k_s, l_s)_{s\in S} $
   and by the $ ( n(x)-1)$-tuple
   $ ( i_\nu)_{ m(x-1)+1 < \nu < m(x)+1 }  $.
   
   If 
   $ t\in D (S:S_x \setminus \{ b \}) $
   then
   $ k_t = k_{t^\prime} $
   and
   $ l_t = l_{t^{\prime\prime}} $
   for some 
   $ t^\prime, t^{ \prime \prime} \in S $.
   Hence, if
    $ t = m(x-1) +1 $, 
    respectively 
    $ t = m( x) + 1 $,
    then
    $ i_{m(x-1) +2}$,
    respectively
    $ i_{m(x)}$
    are uniquely determined by
    $ ( k_s, l_s)_{s\in S}$.
    Also, if
    $ m(x-1) + 1 < t < m(x) + 1 $,
    then both
     $ i_{t} $
     and
     $ i_{t+1} $
     are uniquely determined by
   $ ( k_s, l_s)_{s\in S}$,
   so
   (\ref{sx:1}) follows.    
\end{proof}

\begin{lemma}\label{lemma:alpha:beta}
 Suppose that 
 $ \alpha , \beta \in [ r ] $
have the property that there exist some 
 $ a,b \in S_{\beta}$
 such that
 \begin{align}\label{pq}
     \theta_{ \alpha} \big(
     m(\alpha-1) + 1, m(\alpha) \big)
     =
     \big( p(a), q(b) \big) .
\end{align}
   Then either
   $ \mathcal{E}_N (p, q) = O(N^{-1} )$ 
   or
   $ p(S_{\alpha} ) \subseteq S_{\beta} $ 
   and
   $ q ( S_{\alpha}) \subseteq S_{\beta} $.
\end{lemma}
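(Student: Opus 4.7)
The plan is first to dispose of the trivial case and then argue by contradiction in the main case. If $n(\alpha)=1$, then $S_\alpha=\{m(\alpha)\}=\{m(\alpha-1)+1\}$, and the hypothesis $\theta_\alpha(m(\alpha-1)+1,m(\alpha))=(p(a),q(b))$ directly yields $p(m(\alpha))=a$ and $q(m(\alpha))=b$, both in $S_\beta$, which is the conclusion. For $n(\alpha)\geq 2$, I would first reduce to the case $\theta_\alpha=1$ by adjoint symmetry (swapping the roles of $p$ and $q$ and of the first and last elements of $S_\alpha$), so that the hypothesis reads $p(a)=m(\alpha-1)+1$ and $q(b)=m(\alpha)$ with $a,b\in S_\beta$.

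The key observation is that this hypothesis is precisely what is needed to activate both parts of Lemma \ref{lemma:TQ} at $k=\alpha$: part (i) applies because $p(a)=m(\alpha-1)+1$ with $a\in S_\beta$, and part (ii) applies because $q(b)=m(\alpha)$ with $b\in S_\beta$. My plan is to build $[m]$ in the style of the proof of Corollary \ref{cor:pq:2}, adding all $S_k$ with $k\neq\alpha$ first (via Lemma \ref{lemma:3:1} or Corollary \ref{cor:pq:3}), and then adding $S_\alpha$ last, split into $T_{n(\alpha)-1}$ via Lemma \ref{lemma:TQ}(i) with $\nu=n(\alpha)-1$ and then the singleton $\{m(\alpha)\}=Q_1$ via Lemma \ref{lemma:TQ}(ii) with $\nu=1$. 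The resulting exponent improvement for the addition of $S_\alpha$ is $|D(S:T_{n(\alpha)-1})|+|D(S\cup T_{n(\alpha)-1}:\{m(\alpha)\})|$, to be compared with the $|B(S:S_\alpha)|$ appearing in Lemma \ref{lemma:3:1}.

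The point is that the $D$-counting and the $B$-counting balance each other when every element of $S_\alpha$ has both its $p$- and $q$-partners in $S_\beta$, i.e.\ when the conclusion holds; but $D$ strictly exceeds $B$ whenever some $c\in S_\alpha$ has $p(c)\in S_\gamma$ or $q(c)\in S_\gamma$ for some $\gamma\neq\beta$. In that case, by arranging the buildup so that $S_\gamma\subseteq S$ before $S_\alpha$ is added, the element $c$ still contributes to the $D$-side (its relevant partner lies in $S$), while the block of $p\vee q$ containing $c$ extends into $S_\gamma$ and so is not counted in $|B(S:S_\alpha)|$. This mismatch supplies the extra factor of $N^{-1}$ in the final estimate, giving $\mathcal{E}_N(p,q)=O(N^{-1})$ and closing the contrapositive.

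The main obstacle will be making this $D$-versus-$B$ accounting rigorous across all sub-cases: which of $p(c)$ or $q(c)$ is outside $S_\beta$, whether $c$ is an endpoint or an interior point of $S_\alpha$, and the relative order of $\alpha$, $\beta$, and $\gamma$ in $[r]$. Throughout, I may freely use the alternating block structure of Lemma \ref{lemma:structure} and the non-crossing property of Lemma \ref{lemma:nc}, both of which hold under the standing assumption $\mathcal{E}_N(p,q)\neq O(N^{-1})$; these structural constraints should restrict the combinatorial possibilities enough to make the argument uniform.
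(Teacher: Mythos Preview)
Your proposal has a genuine gap in the $D$-versus-$B$ accounting, which is the heart of the argument.

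Applying Lemma~\ref{lemma:TQ}(i) with $\nu=n(\alpha)-1$ and then Lemma~\ref{lemma:TQ}(ii) with $\nu=1$ yields
\[
\big|\mathcal{A}_N^{(p,q)}(S\cup S_\alpha)\big|
\le N^{\,n(\alpha)-|D(S:T_{n(\alpha)-1})|-|D(S\cup T_{n(\alpha)-1}:\{m(\alpha)\})|}\cdot
\big|\mathcal{A}_N^{(p,q)}(S)\big|.
\]
Because $S_\alpha$ has no internal $p$- or $q$-pairings, the two $D$-terms combine to exactly $|D(S:S_\alpha)|$, so your split recovers precisely the bound of Corollary~\ref{cor:pq:3} and nothing better. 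Under the standing assumption $\mathcal{E}_N(p,q)\ne O(N^{-1})$, Corollary~\ref{cor:pq:2}(iii) forces each block of $p\vee q$ to meet $S_\alpha$ in at most one point, and then $|D(S:S_\alpha)|=|B(S:S_\alpha)|$ for \emph{any} choice of $S$: if $t\in D(S:S_\alpha)$ then its two neighbours $p(t),q(t)$ lie in $S$, but conversely every block counted in $B(S:S_\alpha)$ contributes its unique $S_\alpha$-element to $D(S:S_\alpha)$. Your claimed strict inequality ``$D>B$'' cannot occur. In particular, the sentence ``the block of $p\vee q$ containing $c$ extends into $S_\gamma$ and so is not counted in $|B(S:S_\alpha)|$'' is wrong: you have arranged $S_\gamma\subseteq S$, so that block is contained in $S\cup S_\alpha$ and \emph{is} counted in $B(S:S_\alpha)$.

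What the paper does instead is locate the smallest ``bad'' element $s\in S_\alpha$ (one with $p(s)\notin S_\beta$ or $q(s)\notin S_\beta$) and show that $(k_s,l_s)$ is already determined by $(k_t,l_t)_{t\in S\cup(S_\alpha\setminus\{s\})}$, so that
\[
\big|\mathcal{A}_N^{(p,q)}(S\cup S_\alpha)\big|=\big|\mathcal{A}_N^{(p,q)}(S\cup(S_\alpha\setminus\{s\}))\big|.
\]
Only then does one apply Lemma~\ref{lemma:TQ} to the two pieces $A_1,A_2$ of $S_\alpha\setminus\{s\}$, obtaining the improved exponent $n(\alpha)-1-|D(S:S_\alpha)|$ (using also that $s\notin D(S:S_\alpha)$, which requires $S$ \emph{not} to contain the offending $S_\gamma$). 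The extra $-1$ comes from removing $s$ for free, not from any $D$-versus-$B$ discrepancy. Your outline does not contain this idea, and without it the argument does not close.
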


\begin{proof}
Via a circular permutations of the sets 
   $ S_1$, $ S_2, \dots, S_r $,
 we can suppose that
   $ \beta < \alpha $.
   
If 
  $ n( \alpha ) =  1 $,
then 
  $ S_\alpha $ 
has only one element, 
  $ m(\alpha)= m(\alpha -1) + 1 $,
so the result is trivial.

Suppose that 
  $ n(\alpha ) \geq 2 $ 
and that there exists some
  $ \nu \in S_{\alpha} $ 
such that
  $ p(\nu) \notin S_{\beta} $
or
  $ q(\nu) \notin S_{\beta}$.
Let us denote by 
  $ s $ 
the smallest element of 
  $ S_{\alpha } $ 
with this property.

First, note that, by construction,  
$ s \notin D(S:S_{\alpha}) $,
so
$ D(S:S_\alpha) = D(S:S_\alpha \setminus \{ s \}) $.

Next, note that
\begin{align}
\label{eq:s:1}
\big| \mathcal{A}_N^{(p, q)} (S \cup S_\alpha) \big|
 = 
 \big| \mathcal{A}_N^{(p, q)} (S \cup
( S_\alpha \setminus \{ s \}))  \big|
\end{align}
 Indeed, if 
 $ s = m(\alpha -1) + 1 $ 
 and
 $ \theta_{ \alpha} = 1 $, 
 then
 $ l_s = k_{s+1} $
 while
 (\ref{pq}) gives 
 $ p(a) = s $ 
 so 
 $ k_a = k_s $;
if 
$ s = m(\alpha -1) + 1 $ 
 and
 $ \theta_{ \alpha} = \ast $,
 then
 $ k_s = l_{s+1} $
 while
 (\ref{pq}) gives
 $ q(b)= l_s $.
 In both cases, it follows that any tuple
 $ (k_t, l_t)_{t \in S \cup S_{\alpha} } $
 is uniquely determined by its components
 $ (k_t, l_t)_{ t \in S \cup ( S_\alpha \setminus \{s\})}$,
 so (\ref{eq:s:1}) holds true. A similar argument works for the case
 $ s = m(\alpha) $.
 Finally, if 
 $ m(\alpha-1) + 1 < s < m(\alpha)$,
 then
 $ (k_s, l_s) = ( l_{s-1}, k_{s+1}) $ 
 if
 $ \theta_{ \alpha}= 1 $, 
 respectively
 $ (k_s, l_s) = ( l_{s+1}, k_{s-1}) $ 
 if
 $ \theta_{ \alpha} = \ast$,
 so (\ref{eq:s:1}) holds true.
 
 Let 
 $ A_1 $
 be the possibly empty set
 $ A_1 = \{ m(\alpha-1) + \nu :\ 0 \leq \nu < s - m( \alpha -1 )  \} $
 and denote
 $ A_2 = S_\alpha \setminus ( \{ s \} \cup A_1 ) $. 
 Applying Lemma \ref{lemma:TQ} we then obtain
 that:
 \begin{align*}
\big| \mathcal{A}_N^{(p, q)} (S \cup A_1 \cup A_2) \big|
& \leq 
 N^{|A_2| - | D(S\cup A_1 : A_2 )| } 
\cdot
\big| \mathcal{A }_N^{(p, q)} (S \cup A_1 )  \big|   \\
& \leq 
N^{|A_2| - | D(S\cup A_1 : A_2 )| } 
\cdot
N^{|A_1| - | D(S: A_1 )| } 
\cdot
\big| \mathcal{A}_N^{(p, q)} (S ) \big|  .
\end{align*}   
   Since 
   $ A_1 $ 
   and
   $ A _2 $
   are disjoint, 
   $ D( S: A_1 \cup A_2 ) = D(S: A_1) \cup D(S:A_2) $.
   Moreover, Lemma \ref{lemma:structure} gives that either
   $ \mathcal{E}_N{(p, q)} = O(N^{-1})$
   or
$ D(S:A_1) \cap D(S:A_2) = \emptyset$
and   
   $ D(S\cup A_1: A_2 ) = D (S : A_2) $.
   Hence, we obtain that either
$ \mathcal{E}_N{(p, q)} = O(N^{-1})$
or
\begin{align*}
\big| \mathcal{A}_N^{(p, q)} (S \cup A_1 \cup A_2) \big|
 \leq
 N^{|A_1| + | A_2| - | D(S:A_1 \cup A_2 )| }
 \cdot
\big|  \mathcal{A}_N^{(p, q)} (S) \big|  .
\end{align*}   
Since 
$ S_\alpha $
 is the disjoint union of
$ A_1 $ 
and 
$ A_2$, 
the equality
(\ref{eq:s:1}) gives that the relation above is equivalent to
\begin{align*}
\big| \mathcal{A}_N^{(p, q)} (S \cup S_\alpha) \big|
 \leq
N^{ n(\alpha)-1-|D(S:S_{\alpha})| } 
\cdot
\big| \mathcal{A}_N^{(p, q)} (S) \big|
\end{align*}
which, from Corollary \ref{cor:pq:3} also implies that
$ \mathcal{E}_N{(p, q)} = O(N^{-1})$.
\end{proof}

\begin{lemma}\label{lemma:1-1}
Suppose that 
$\alpha, \beta \in [ r] $
are such that
\begin{align}
\label{pqab}
p( S_{\alpha} ) = S_\beta = q ( S_{ \alpha}).
\end{align}
Then either 
$ \mathcal{E}_N^{(p, q)} = O(N^{-1})$
or for each
$ \nu \in [ n ( \alpha)] $,
we have that 
\[
 p ( m( \alpha -1) + \nu ) 
= q ( m ( \alpha -1) + \nu )= m( \beta ) + 1 - \nu. 
\]
\end{lemma}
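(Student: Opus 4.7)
The plan is to assume $\mathcal{E}_N(p,q)$ is not $O(N^{-1})$ and deduce the stated identity on $S_\alpha$. First, I would combine part (iii) of Corollary \ref{cor:pq:2} with Lemma \ref{lemma:structure} to force every block of $p \vee q$ to meet each $S_k$ in at most one element. Together with the hypothesis $p(S_\alpha) = S_\beta = q(S_\alpha)$, this means each block meeting $S_\alpha$ is a $2$-element set $\{a,b\}$ with $a \in S_\alpha$ and $b \in S_\beta$. For such a block, the structural relation $\{p(a_s), q(a_s)\} = \{a_{s-1}, a_{s+1}\}$ of Lemma \ref{lemma:structure} (interpreted cyclically with $h=2$) reduces to $p(a) = q(a) = b$. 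Hence $p$ and $q$ induce a common bijection $\phi : S_\alpha \to S_\beta$, and in particular $n(\alpha) = n(\beta)$.

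Writing $\psi(\nu) = \phi(m(\alpha-1)+\nu)$, the goal is to show $\psi(\nu) = m(\beta)+1-\nu$. Assume WLOG $\theta_\alpha = 1$, so $\theta_\beta = \ast$. For an interior $s \in S_\alpha$ the $\vartheta$-formula gives $(k_s, l_s) = (i_s, i_{s+1})$, while for an interior $t \in S_\beta$ it gives $(k_t, l_t) = (i_{t+1}, i_t)$. The pairing constraint $s \leftrightarrow \phi(s)$ therefore imposes
\[
i_s = i_{\phi(s)+1}, \qquad i_{s+1} = i_{\phi(s)}.
\]
Applied to two consecutive interior elements $s, s+1 \in S_\alpha$ these force $i_{\phi(s)} = i_{s+1} = i_{\phi(s+1)+1}$, which is tautological precisely when $\phi(s+1)+1 = \phi(s)$, and produces a genuine extra identification in the $S_\beta$-range of indices otherwise. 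The boundary cases $s = m(\alpha-1)+1$ and $s = m(\alpha)$ require a separate but analogous analysis: the $\vartheta$-formula there involves the couples $\sigma_\alpha(i_{m(\alpha-1)+1}, i_{m(\alpha)+1})$ and $\sigma_\beta(i_{m(\beta-1)+1}, i_{m(\beta)+1})$, and a direct case analysis (illustrated by $n(\alpha) = 2$, where non-reverse $\phi$ forces both $\sigma_\alpha(X,Y)$ and $\sigma_\beta(Z,W)$ to land on the diagonal of $[N]^2$) shows that non-reverse $\phi$ again costs a factor of $N$.

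The plan for finishing is to show that if $\phi$ is not the order-reversing map $\nu \mapsto m(\beta)+1-\nu$, then the extra identifications cause $|\mathcal{A}_N^{(p,q)}(S_\alpha \cup S_\beta \cup S')|$ to drop by at least one factor of $N$ below the bound used in part (i) of Corollary \ref{cor:pq:2}, for a convenient $S' = S_1 \cup \cdots \cup S_{k^*-1}$ with $k^* = \max(\alpha,\beta)$. Propagating this loss through $[m]$ via Lemma \ref{lemma:3:1} as in the proof of part (iii) of Corollary \ref{cor:pq:2} yields $|\mathcal{A}_N^{(p,q)}| = O(N^{m - |p \vee q|})$ and hence $\mathcal{E}_N(p,q) = O(N^{-1})$. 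The main obstacle is to verify rigorously that the identification $i_{\phi(s)} = i_{\phi(s+1)+1}$ (or its boundary variant) is \emph{not} already a consequence of the other pairings, particularly in view of the bijectivity of $\sigma_\alpha$ at the two ends of $S_\alpha$; I would organize this by induction on the smallest $\nu_0$ with $\psi(\nu_0) \ne m(\beta)+1-\nu_0$, using the identifications already established for $\nu < \nu_0$ as input.
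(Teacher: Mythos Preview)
Your approach is essentially the same as the paper's: first use Corollary~\ref{cor:pq:2}(iii) and Lemma~\ref{lemma:structure} to force $p=q$ on $S_\alpha$ (giving a common bijection $\phi:S_\alpha\to S_\beta$), then show that if $\phi$ is not order-reversing the count drops by a factor of $N$, and conclude via Corollary~\ref{cor:pq:2}(ii). The paper's execution resolves your ``main obstacle'' more directly by parametrizing $\mathcal{A}_N^{(p,q)}(S_\alpha)$ alone (not $S_\alpha\cup S_\beta\cup S'$) via the $n(\alpha)+1$ free values $(k_{m(\alpha-1)+1},\dots,k_{m(\alpha)},l_{m(\alpha)})$: the chain $k_{t+1}=l_t=l_{q(t)}=k_{q(t)-1}=k_{p(q(t)-1)}$ then gives a manifestly nontrivial relation among these coordinates whenever $t+1\neq p(q(t)-1)$ (equivalently $\phi(t+1)\neq\phi(t)-1$), and the boundary case $q(m(\alpha))\neq m(\beta-1)+1$ similarly forces $l_{m(\alpha)}$ into the $k$-span, so no separate induction on $\nu_0$ or $n(\alpha)=2$ case analysis is needed.
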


\begin{proof}

First, note that  condition (\ref{pqab}) implies 
$ \theta_{ \alpha}\neq \theta_{ \beta} $;
without loss of generality, we can suppose that
$ \theta_{ \alpha} = 1 $
and
$\theta_{ \beta} = \ast$.

Next, note that, since
 $ p $ and $ q $
 are injective, the condition
  (\ref{pqab}) 
  implies 
   $ n(\alpha ) = n( \beta ) $.
Also, (\ref{pqab}) gives that 
$ S_{\alpha} \cup S_{\beta} $
 is a union of blocks of 
$ p \vee q $.
 Corollary \ref{cor:pq:2} gives that either
 $ \displaystyle 
  \mathcal{E}_N ( p, q) = O(N^{-1}) $,
or each of these blocks have exactly two elements, one in 
$ S_{\alpha}$
and one in 
$ S_{\beta}$.
The last property means that
$ p(t) = q(t) $
for all
$ t \in S_{\alpha } \cup S_{\beta}$
and that
\[
 | B( S_{\alpha}: S_{\beta}) | = 
|  D( S_{\alpha} : S_{\beta})| 
= n(\alpha).
\]
Therefore Lemma \ref{lemma:3:1} gives 
\begin{align*}
\big| \mathcal{A}_{N}^{(p, q)} ( S_{\alpha} \cup S_{\beta}) \big|
 \leq 
\big| \mathcal{A}_{N}^{(p, q)} ( S_{\alpha} ) \big|
  \cdot
  N^{ n( \beta) - n( \alpha)}
  = 
\big|  \mathcal{A}_{N}^{(p, q)} ( S_{\alpha} ) \big|.
 \end{align*}
and, utilizing Corollary \ref{cor:pq:2}, we obtain that either
$ \mathcal{E}_N{(p, q)} = O(N^{-1}) $ or
$
\big| \mathcal{A}_{N}^{(p, q)} ( S_{\alpha} ) \big|
 = N^{ n(\alpha) +1}.
$

As seen before, each 
$(2n(\alpha))$-tuple
$ ( k_s, l_s)_{s\in S_\alpha } $
is uniquely determined by the
$ (n(\alpha)+1) $-tuple 
$ \big( i_{m(\alpha-1)+1}, i_{m(\alpha-1) + 2}, \dots, i_{m(\alpha) +1} \big) $.
Moreover, using that 
$ \theta_{ \alpha} = 1 $, 
 we have that the following relations are satisfied whenever
  $ { m(\alpha-1) +2} \leq t \leq m(\alpha)$:
  \begin{align*}
  &i_{t} = k_{t +1} =l_t \\
  &\sigma_{ \alpha } ( i_{ m(\alpha-1)+1}, i_{m(\alpha) +1 }) = ( k_{ m( \alpha-1)+1}, l_{m(\alpha)}).
  \end{align*}
 Hence the $ 2n(\alpha)$-tuple
 $ ( k_s, l_s)_{s\in S_{\alpha}} $
 is uniquely determined by  $ l_{m(\alpha)} $
 and the
 $( n(\alpha))$-tuple
 $ (k_{m(\alpha-1) +1}, k_{m(\alpha-1)+2}, \dots, 
 k_{m(\alpha)}) $.
 
 On the other hand, since 
 $ \theta_{ \beta} = \ast$, 
 we have
 \[ \sigma_{ \beta} (i_{ m(\beta-1) + 1}, i_{ m(\beta)} ) =
  ( k_{ m(\beta)}, l_{ m( \beta -1) +1})
 \]
 and
 $ i_t = k_t = l_{ t +1} $
 for
 $ m( \beta -1) + 2 \leq t \leq m( \beta ) $.
 
 Next, remark that either 
 $\mathcal{E}_N (p, q)= O(N^{-1})$ 
 or 
 $ q ( m(\alpha)) = m( \beta -1) + 1 $.
 Indeed,  if 
 $ q ( m(\alpha)) \neq m( \beta - 1) + 1 $
 then
  $ q ( m ( \alpha)) -1 \in S_{\beta} $.
  Since 
  $ l_t = l_{ q(t) } $,
  we get
 $ l_{ m( \alpha)} = l_{ q ( m( \alpha))} $,
 and, using that
 $ q ( m( \alpha )) \in S_{\beta} $,
 it follows that
 $
  l_{ q ( m ( \alpha) ) } = k_{ q ( m ( \alpha) ) -1} $
  which gives
  $ l_{ m( \alpha)} = k_{ p ( q ( m ( \alpha) -1))}$.
  This implies that 
  $ ( k_s, l_s)_{ s\in S} $
  is uniquely determined by the
  $ n(\alpha)$-tuple
  $ ( k_t)_{ t \in S_{\alpha}}$, 
  i.e.
  $\big| \mathcal{A}_N^{(p, q)}(S_{\alpha}\cup S_{\beta})\big|
  \leq N^{ n(\alpha)} $,
  therefore
   $ \displaystyle \mathcal{E}_N ( p, q) = O(N^{-1})$.
   
   For
  $ m( \alpha -1) + 1 \leq t \leq m(\alpha) - 1 $
  we have that 
  $ t + 1 \in m( \alpha) $, 
  hence
  $ k_{t+1} = l_ t $.
 But
  $ l_t = l_{q(t)} = k_{ q(t) -1} $,
  since
   $ q(t) \in S_{\beta} $.
   On the other hand, 
   $ k_{ q (t) -1} = k_{p( q(t) -1)} $,
   which gives
   $ k_{t+1}= k_{p( q(t) -1 ) } $
   with both
   $ t+1 $ and  $ p( q(t) -1 ) $ elements of 
   $ S_{\alpha} $.
   If 
    $ t +1 \neq p ( q(t) -1 ) $, 
    then there are at most 
    $ N^{ n( \alpha)} $ 
    possible choices for the tuple
   $ (k_{m(\alpha-1) +1}, k_{m(\alpha-1)+2}, \dots, 
    k_{m(\alpha)}, l_{m(\alpha)} )  $,
    therefore, according to Corollary \ref{cor:pq:2}, we have that
    $  \mathcal{E}_N ( p, q) = O(N^{-1}) $.
    It follows that
     $ t + 1 = p( q(t) - 1) $, 
     hence, since $ p $ and  $ q $ coincide on 
     $ S_{\alpha} \cup S_{\beta } $,
     we have that 
     $ q(t+ 1) = q(t) -1 $,
     and an inductive argument gives the conclusion.
    \end{proof}

\section{Main Results}

\begin{lemma}\label{lemma:sup}
	Suppose that there exists some 
	$ k \in [ r ] $
	such that the sequence
	$ \big( \sigma_{k, N} \big)_N $  
	satisfies the following property:
	\begin{equation}\label{max}
	\lim_{N \rightarrow \infty} \frac{1}{N}
	\sup_{(a,b) \in [ N ]^2}
	\sum_{ \alpha =1, 2 }
	 | \big\{ \nu \in [ N ] :\  a \in \{ \pi_\alpha\circ \sigma (\nu, j),  \pi_\alpha \circ \sigma ( j, \nu) \} \big\}| = 0,
	\end{equation}
where 
$ \pi_1$, $ \pi_2 $ are the canonical projections.
	
	If
	$ p. q \in P_2^{\varepsilon}(m) $
	are such that
	$ \displaystyle \limsup_{N \rightarrow \infty }
	\mathcal{E}_{N}^{( p, q)}  \neq 0 $
	and 
	$ a, b \in [ m ] $ 
	are such that
	\[
	\big( p(a), q(b) \big) = \theta_k \big( m( k-1 ) + 1, m( k ) \big),
	\]
	then 
	$ a $ and $ b $ are both elements of the same 
	$ S_\alpha $ 
	for some 
	$ \alpha \in [ r ] $.
\end{lemma}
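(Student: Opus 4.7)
The plan is to argue by contrapositive: assume $a \in S_\alpha$ and $b \in S_{\alpha'}$ with $\alpha \neq \alpha'$ and deduce $\mathcal{E}_N^{(p,q)} = o(1)$, contradicting $\limsup_{N\to\infty} \mathcal{E}_N^{(p,q)} \neq 0$. First I would dispose of the degenerate subcases: if $\alpha = k$, then $a$ and $p(a) = m(k-1)+1$ would be two distinct elements of one block of $p \vee q$ both lying in $S_k$, so Corollary \ref{cor:pq:2}(iii) gives $\mathcal{E}_N^{(p,q)} = O(N^{-1})$; the same applies to $\alpha' = k$. Combining Corollary \ref{cor:pq:2}(iii) with Lemma \ref{lemma:structure} further allows us to assume that no block of $p \vee q$ meets any $S_j$ in more than one element and that blocks have the standard cyclic alternating structure.

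Assume WLOG $\theta_k = 1$; the case $\theta_k = \ast$ is handled by taking adjoints. Writing $r_{k-1} = i_{m(k-1)+1}$ and $r_k = i_{m(k)+1}$, the hypotheses $p(a) = m(k-1)+1$ and $q(b) = m(k)$ translate into $\sigma_k(r_{k-1}, r_k) = (k_a, l_b)$. I would then apply a cyclic permutation of $S_1, \dots, S_r$ so that $S_k$ is added last in an iterated application of Lemma \ref{lemma:3:1}, setting $S = [m] \setminus S_k$, which in particular contains $S_\alpha, S_{\alpha'}, S_{k-1}, S_{k+1}$. For each tuple $(k_s, l_s)_{s \in S} \in \mathcal{A}_N^{(p,q)}(S)$, the four quantities $k_a, l_b, r_{k-1}, r_k$ are determined (the last two via $\sigma_{k-1}^{-1}$ and $\sigma_{k+1}^{-1}$ applied to appropriate subtuples indexed by $S_{k-1}$ and $S_{k+1}$), and the identity $\sigma_k(r_{k-1}, r_k) = (k_a, l_b)$ becomes a nontrivial consistency constraint on $(k_s, l_s)_{s \in S}$. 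This is where the sparseness hypothesis \eqref{max} enters: for each fixed value of $r_k$ and $k_a$, the number of $r_{k-1} \in [N]$ with $\pi_1 \circ \sigma_k(r_{k-1}, r_k) = k_a$ is $o(N)$ uniformly, so the number of tuples in $\mathcal{A}_N^{(p,q)}(S)$ that extend to $[m]$ is smaller by a factor of $o(N^{-1})$ than the naive bound obtained by ignoring this constraint.

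Combining this improvement with the iterated bound of Lemma \ref{lemma:3:1} (as in the proof of Corollary \ref{cor:pq:2}(i)) yields $|\mathcal{A}_N^{(p,q)}| = o(N^{m+1-|p \vee q|})$; multiplying by $\mathrm{Wg}_N(p,q) = O(N^{-m+|p \vee q|})$ from \eqref{w-mult} and the normalizing factor $1/N$ then gives $\mathcal{E}_N^{(p,q)} = o(1)$, the desired contradiction. The main technical obstacle I foresee is the bookkeeping in this sparseness step: one must convert the pointwise $o(N)$ bound of \eqref{max} into an $o(1)$ saving within the iterated count, and verify that the saving is not swallowed in configurations where $S_\alpha$ or $S_{\alpha'}$ coincides with $S_{k-1}$ or $S_{k+1}$, or where $\alpha$ and $\alpha'$ lie on the same side of $k$ in the cyclic ordering and so interact with the neighbors of $S_k$ in a different pattern.
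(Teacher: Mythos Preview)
Your overall plan (contrapositive, reduce to $\theta_k=1$ by symmetry, exploit the sparseness condition \eqref{max} to save a factor $o(N)$ in the count) matches the paper's. The implementation, however, diverges at the key step and leaves a genuine gap.

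You set $S=[m]\setminus S_k$, so both neighbours $S_{k-1},S_{k+1}$ lie in $S$ and hence all four quantities $k_a,l_b,r_{k-1},r_k$ are already determined by a tuple $(k_s,l_s)_{s\in S}$. The relation $\sigma_k(r_{k-1},r_k)=(k_a,l_b)$ is then a yes/no constraint on that tuple, not a restriction on the number of extensions. Your sentence ``for each fixed value of $r_k$ and $k_a$, the number of $r_{k-1}\in[N]$ with $\pi_1\circ\sigma_k(r_{k-1},r_k)=k_a$ is $o(N)$'' treats $r_{k-1}$ as a free summation variable, but in your setup it is not: it is pinned down by $(k_s,l_s)_{s\in S_{k-1}}$. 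The iterated Lemma~\ref{lemma:3:1} bound on $|\mathcal{A}_N^{(p,q)}(S)|$ is only an upper bound, not a parametrization in which $r_{k-1}$ appears as an explicit factor of $N$, so you cannot simply replace that factor by $o(N)$. You anticipated this obstacle at the end of your proposal, and it is real.

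The paper avoids it by taking $S$ one--sided. After a cyclic permutation forcing $\alpha<k<\beta$ (or $\beta<k<\alpha$), it sets $S=S_1\cup\dots\cup S_{k-1}$, so that $a\in S$ (hence $k_a=k_{m(k-1)+1}$ is known) while $S_{k+1}\not\subset S$ and $i_{m(k)+1}$ is \emph{not} determined. It then extends $S$ by $S_k\setminus\{m(k)\}$ using Lemma~\ref{lemma:TQ} (not Lemma~\ref{lemma:3:1}), and at the final step $(k_{m(k)},l_{m(k)})$ depends on the single undetermined index $i_{m(k)+1}$; condition~\eqref{max} then bounds the number of admissible $i_{m(k)+1}$ by $o(N)$ directly, and Corollary~\ref{cor:pq:2}(ii) finishes. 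The case split $\alpha<k<\beta$ versus $\beta<k<\alpha$ is precisely what guarantees that exactly one of $a,b$ lies in $S$ and one endpoint index remains free --- the mechanism your two--sided choice destroys.
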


\begin{proof}
 First assume that 
$ \theta_k =1 $;
let us suppose that
$ m(s-1)+1 = p(a) $
and
$ m(s) = q(b) $
for some
$ a \in S_\alpha $
and 
$ b \in  S_\beta $
such that
$ \alpha \neq \beta $.
Via a circular permutation of the sets
$ S_1, S_2, \dots, S_r $,
we can further suppose that either
$ \alpha < k < \beta $
or
$ \beta < k < \alpha $.

Denote
$ S = S_1 \cup S_2 \cup \dots \cup S_{ k -1} $.
Since
$ \alpha \neq k \neq \beta $,
we have that
$ S \neq \emptyset $.
From Corollary \ref{cor:pq:2}, if suffices to show that

\begin{align}\label{eq:s:s_k}
\big| \mathcal{A}_{N}^{( p, q)} \big( S \cup S_k \big) \big|
= 
\big| \mathcal{A}_{N}^{( p, q)} \big( S  \big) \big|
\cdot
o\big(   N^{ n (k) - | B ( S: S_k )| }  \big). 
\end{align}

 Suppose first that 
$ \alpha < k < \beta $. 
As proved in Lemma \ref{lemma:TQ}, 
we have that
\begin{align}\label{eq:s:l2}
\big| \mathcal{A}_{N}^{ ( p, q)}
( S \cup ( S_k \setminus \{ m(k ) \} ) ) \big|
\leq
\big| \mathcal{A}_{N}^{( p, q)} \big( S  \big) \big|
\cdot
N^{ n(k) - 1 - | D( S: S_k \setminus \{ m( k ) \} )|}.
\end{align}

Since 
$ q ( m (k) ) = b \notin S $,
we have that 
$ m(k) \notin D ( S: S_k ) $,
therefore  
$ D(S: S_k \setminus \{ m(k)\} ) = D( S: S_k ) $.
So (\ref{eq:s:l2}) gives
\begin{align*}
\big|\mathcal{A}_{N}^{ ( p, q)} 
\big( S \cup ( S_k \setminus \{ m(k ) \} ) \big) \big|
\leq
 \big|  \mathcal{A}_{N}^{( p, q)} \big( S \big) \big|
\cdot
N^{ n(k) - 1 - | D( S: S_k )|}
\leq
\big| \mathcal{A}_{N}^{( p, q)} \big( S \big)  \big|
\cdot
N^{ n(k) - 1 - | B( S: S_k )|},
\end{align*}
that is, to show (\ref{eq:s:s_k}), if suffices to prove that
\begin{align*}
\big| \mathcal{A }_{N}^{ ( p, q)} 
\big( S \cup  S_k  \big)  \big|
=
o(N ) \cdot 
\big| \mathcal{A}_{N}^{ ( p, q)}
\big(  S \cup ( S_k \setminus \{ m(k ) \} )  \big) \big|,  
\end{align*}
i.e. it suffices to show that for any given tuple 
$( k_t, l_t)_{ t \in S \cup S_k \setminus \{ m(k)\}} $
there are 
$ o(N) $
pairs
$ ( k_{ m(k)}, l_{m(k)} ) $ 
such that their juxtaposition
$ ( k_t, l_t)_{ t \in S \cup S_k } $
is in
$ \mathcal{A}_N^{(p, q} ( S \cup S_k )$.
By construction (see the notations from \ref{notations}), 
we have that 
$ k_{ m (k) } = l_{ m(k ) -1 } $,
therefore
$ k_{ m(k)} $
is uniquely determined by the tuple
$( k_t, l_t)_{ t \in S \cup S_k \setminus \{ m(k)\}} $.
Next, according to Notations \ref{notations}, we have that
\begin{align*}
i_{ m(k-1) + 1} = \left\{
\begin{array}{ll}
\pi_1  \circ \sigma_{k-1, N} \big(k_{ m(k-2) +1}, l_{ m(k-1)}   \big) \textrm{ if } 
\theta_k =1 \\
\pi_1 \circ \sigma_{k-1, N} \big(l_{ m(k-2) +1}, k_{ m(k-1)}   \big) \textrm{ if } 
\theta_k = \ast
\end{array}
\right.
\end{align*}
and
\begin{align*}
 l_{ m(k)}  = \pi_2 \circ \sigma_{k, N} ( i_{m(k-1)+1}
, i_{ m(k) +1 } ).
\end{align*}
 Therefore
 $ i_{ m(k-1) + 1} $
also  is uniquely determined by 
$( k_t, l_t)_{ t \in S \cup S_k \setminus \{ m(k)\}} $,
and, given the tuple
$( k_t, l_t)_{ t \in S \cup S_k \setminus \{ m(k)\}} $,
we have that 
$ l_{m(k)} $ 
is uniquely determined by 
$ i_{ m(k) + 1} $.

But, using property (\ref{max}) for the second relation, we have that
\begin{align*}
| \big\{ i_{ m(k) + 1}: \ ( k_t, l_t)_{ t \in S \cup S_k } \in
\mathcal{A}_N^{(p, q} ( S \cup S_k ) \big\}| \leq 
| \big\{ 
\nu :\ k_{m(k-1) + 1} = \pi_1 
\circ \sigma (i_{ m(k-1) +1}, \nu )
\big\}| = o(N),
\end{align*}
and the conclusion follows.

The case 
$ \beta < k < \alpha $ 
is similar: this time let
$ S = S_{k +1} \cup \dots \cup S_r $ 
and, via Lemma \ref{lemma:TQ}, it suffices to show that 
\begin{align*}
\big| \mathcal{A}_{N}^{ ( p, q)}
\big( S \cup  S_k  \big) \big|
=
o(N ) \cdot 
\big| \mathcal{A}_{N}^{ ( p, q)}
\big(  S \cup ( S_k \setminus \{ m(k-1 )+1 \} )  \big)
\big| . 
\end{align*}
 Given a tuple 
 $( k_t, l_t)_{ t \in S \cup S_k \setminus \{ m(k-1)+1\}} $,
 it uniquely determines, from the constructions in Notations \ref{notations}, both 
 $ l_{m(k-1) +1} $ 
 and
 $ i_{ m(k)} $; 
 also, since 
 $ p(a) = m(k-1)+1 $,
 that is 
 $ k_a = k_{ m(k-1) +1}$, 
 for some 
 $ a \in S_\alpha \subseteq S $,
 the tuple also uniquely determines 
 $k_{ m(k-1) +1} $.
 So the pair 
 $ ( k_{m(k-1) + 1}, l_{m(k-1) + 1})$
 is uniquely determined by the tuple 
 $( k_t, l_t)_{ t \in S \cup S_k \setminus \{ m(k-1)+1\}} $
 and by
 $ i_{ m(k-1) + 1} $.
 Furthermore, the number of possible choices for 
 $ i_{ m(k-1) +1} $ 
 is bounded above by the cardinality of the set
 \begin{align*}
 \{ \nu :\  k_a = \pi_1 \circ 
 \sigma_{ k, N} ( \nu, i_{ m(k)})    \}
 \end{align*}
 which, from Property (\ref{max}), equals $o ( N ) $.
 
 Finally, the case the case 
 $ \theta_k = \ast $
 follow similarly, by taking adjoints, switching 
 $ p $ and $ q $
  and using Property (\ref{max}) for 
 $ \pi_2 $.
\end{proof}

An immediate consequence of Lemmata \ref{lemma:sup} and \ref{lemma:alpha:beta} is the following.

\begin{corollary}\label{cor:sup}
	Suppose that $ \alpha \in [ r]  $ is such that the sequence 
$ \big( \sigma_{ \alpha, N} \big)_N $ 
satisfies property 
(\ref{max})
and that the pairings
$ p, q \in P_{2}^{\varepsilon} ( m ) $ 
are such that 	
	$ \displaystyle \limsup_{N \rightarrow \infty }
\mathcal{E}_{N}^{( p, q)}  \neq 0 $.
Then there exists some
$ \beta \in [ r ] $ 
 such that 
 $ p(S_\alpha), q(S_\alpha) \subseteq S_\beta $.
\end{corollary}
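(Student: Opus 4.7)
The strategy is a direct two-step concatenation of Lemma \ref{lemma:sup} and Lemma \ref{lemma:alpha:beta}. The first lemma will extract, from property (\ref{max}), a single index $\beta$ such that two specific preimages under $p$ and $q$ of the boundary of $S_\alpha$ lie in a common block $S_\beta$; the second lemma then promotes that pair of preimages to the full inclusion $p(S_\alpha), q(S_\alpha) \subseteq S_\beta$.

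For the first step, write $(u, v) = \theta_\alpha\bigl(m(\alpha-1)+1, m(\alpha)\bigr)$ and set $a = p(u)$, $b = q(v)$, using that $p$ and $q$ are fixed-point-free involutions on $[m]$. By construction $\bigl(p(a), q(b)\bigr) = (u, v) = \theta_\alpha\bigl(m(\alpha-1)+1, m(\alpha)\bigr)$, so Lemma \ref{lemma:sup} applies with $k = \alpha$: the hypothesis that $(\sigma_{\alpha, N})_N$ satisfies (\ref{max}) is built into our assumptions, and $\limsup_N \mathcal{E}_N^{(p,q)} \neq 0$ is assumed. The lemma therefore yields some $\beta \in [r]$ with $a, b \in S_\beta$.

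The second step feeds this very $\beta$ into Lemma \ref{lemma:alpha:beta}. Its hypothesis asks for the existence of $a, b \in S_\beta$ with $\theta_\alpha\bigl(m(\alpha-1)+1, m(\alpha)\bigr) = \bigl(p(a), q(b)\bigr)$, which is precisely what the previous step produced. The conclusion is the dichotomy $\mathcal{E}_N(p, q) = O(N^{-1})$ or $p(S_\alpha), q(S_\alpha) \subseteq S_\beta$, and the first alternative contradicts $\limsup_N \mathcal{E}_N^{(p,q)} \neq 0$, so the second alternative must hold.

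There is essentially no substantive obstacle once the two lemmata are in hand; the only bookkeeping point is tracking the $\theta_\alpha \in \{1, \ast\}$ convention correctly when defining $a$ and $b$, so that $\bigl(p(a), q(b)\bigr)$ matches the ordered pair $\theta_\alpha\bigl(m(\alpha-1)+1, m(\alpha)\bigr)$ uniformly in both cases, which the choice $a = p(u)$, $b = q(v)$ above handles.
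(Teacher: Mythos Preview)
Your proof is correct and follows exactly the route the paper intends: the paper states this corollary as ``an immediate consequence of Lemmata \ref{lemma:sup} and \ref{lemma:alpha:beta}'' without further detail, and your two-step chaining of those lemmata is precisely that consequence spelled out.
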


 The main result of the paper is the theorem below.
 
   \begin{theorem}\label{main}
 Let  
    $ P $ 
 be a positive integer.
 For each $ k = 1, 2, \dots, P $,
 suppose that 
 $ \big( \tau_{ k, N } \big)_N $ 
 is a family of permutations that satisfies condition (\ref{max}) with each
 $ \tau_{ k, N } $ 
  a permutation on 
  $ [ N ] \times [ N ] $.  
 If for each distinct 
 $ a, b \in \{ 1, \dots, P \} $ 
 the sequences  
    $ \big(\tau_{ a, N} \big)_N $
    and 
   $ \big( \tau_{b, N } \big)_N $
   satify the condition
\begin{align}\label{2:free}
\lim_{N \rightarrow \infty}\frac{1}{N^2}
\big|  \big\{ (i_1, i_2, i_3) \in [ N ]^3 : \  \tau_{a, N}(i_1, i_2) \in \{  \tau_{ b, N} (i_1, i_3) , \tau_{b, N} (i_3, i_2) \}
\big\}
\big| = 0
\end{align}   
   then, for any positive integer 
   $ M $, the family 
 $ \big\{ ( U_N^l )^{ \tau_{k, N}} : \  1 \leq l \leq M, \
 	1 \leq k \leq P \big\}$
 is asymptotically (as 
 $ N \rightarrow \infty $
 ) free circular of variance $ 1 $
 and free from   
 $ U_N $.
 \end{theorem}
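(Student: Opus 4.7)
My plan is to verify asymptotic $\ast$-freeness by computing an arbitrary mixed $\ast$-moment of the variables in the family, including $U_N$ itself (treated as $(U_N^{1})^{\mathrm{id}}$, noting that the identity permutation trivially satisfies both (\ref{max}) and (\ref{2:free})), and matching it against the corresponding moment in the free product of a family of standard free circular elements $\{c_{k,l}\}_{1 \le k \le P,\, 1 \le l \le M}$ with a Haar unitary free from them. Expanding such a moment via (\ref{E:p:q}) rewrites it as $\sum_{p,q\in\mathcal{P}_2^{\varepsilon}(m)} \mathcal{E}_N(p,q)$, and Corollary \ref{cor:pq:2}(i) ensures each summand is $O(1)$.

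The main task is to identify which $(p,q)$ can yield a nonzero contribution in the limit. First, Corollary \ref{cor:sup}, applied for each $\alpha \in [r]$, forces every $S_\alpha$ to be sent by both $p$ and $q$ into some common $S_{\beta(\alpha)}$. Lemma \ref{lemma:alpha:beta} (and its analogue on the image side) then upgrades this to an actual bijection $p(S_\alpha) = q(S_\alpha) = S_{\beta(\alpha)}$, from which Lemma \ref{lemma:1-1} pins down $p$ and $q$ on these blocks as the reflection $m(\alpha-1)+\nu \mapsto m(\beta(\alpha))+1-\nu$, enforces $\theta_\alpha \neq \theta_{\beta(\alpha)}$, and in particular forces matching of powers $n_\alpha = n_{\beta(\alpha)}$. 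A further application of condition (\ref{2:free}), through a counting argument parallel to Lemma \ref{lemma:sup} but with triples replacing pairs, shows that the matched pair $(\alpha,\beta(\alpha))$ must share the same entry-permutation, i.e. $\sigma_\alpha = \sigma_{\beta(\alpha)}$. Finally, Lemma \ref{lemma:nc} forbids crossings of the induced block-level pairing on $[r]$, and Lemma \ref{lemma:structure} ensures that within each $p \vee q$-block the index-level structure is the expected alternating one.

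After these reductions, the surviving $(p,q)$ are in bijection with non-crossing pair-partitions on $[r]$ that match each $S_\alpha$ only to an adjacent $S_{\beta}$ carrying the $\ast$-conjugate $\theta$, identical power $n$, and identical permutation $\sigma$; this is precisely the combinatorial structure of the Wick expansion for mixed $\ast$-moments of independent free standard circular elements $c_{k,l}$ together with a free Haar unitary $u$. For each surviving pair $(p,q)$, the estimate (\ref{w-mult}) combined with the sharp equality cases of the inequalities in Section 3 (all the $\le$'s becoming equalities for matched blocks with the reflection pattern) yields $\mathcal{E}_N(p,q) = 1 + o(1)$. Summing over all surviving pairings therefore reproduces exactly the prescribed free moment, giving the claimed asymptotic freeness.

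The step I expect to be the main obstacle is the ``separation-by-$k$'' argument in the second paragraph, which is the genuine ``freeness between distinct $\tau_{k,N}$'' ingredient and is not directly covered by Lemma \ref{lemma:sup}. Concretely, one must show that if the matched blocks $S_\alpha$ and $S_{\beta(\alpha)}$ use two different permutations $\tau_{a,N}$ and $\tau_{b,N}$ with $a \neq b$, then the juxtaposition of Notation-(\ref{notations})-constraints at the boundary of these blocks forces, for each pair $(k_t, l_t)_{t \in S}$ outside the two blocks, the remaining free index $i_{m(k)+1}$ (or $i_{m(k-1)+1}$) to satisfy $\tau_{a,N}(i_1,i_2) \in \{\tau_{b,N}(i_1,i_3), \tau_{b,N}(i_3,i_2)\}$ for some derived triple; then (\ref{2:free}) supplies the required $o(N^2)$-gain, gaining the extra factor of $N$ needed to push $\mathcal{E}_N(p,q)$ to zero. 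This is an adaptation of the proof of Lemma \ref{lemma:sup} but one that, because it involves two permutations simultaneously, has to be bookkept carefully.
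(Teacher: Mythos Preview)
There is a genuine gap in your reduction. The identity permutation does \emph{not} satisfy condition~(\ref{max}): taking $a=b$ one has $\pi_1\circ\mathrm{id}(b,\nu)=b=a$ for every $\nu\in[N]$, so the supremum in~(\ref{max}) is at least $N$ and the limit is $\geq 1$, not $0$. Consequently you cannot apply Corollary~\ref{cor:sup} (nor Lemma~\ref{lemma:sup}) to the singleton blocks $S_\alpha$ coming from factors $U_N^{\pm 1}$, and your structural conclusion ``$p(S_\alpha)=q(S_\alpha)=S_{\beta(\alpha)}$ for every $\alpha$'' is simply false for those blocks. Indeed, Lemma~\ref{lemma:structure} allows a $p\vee q$-block of any even size $s\geq 2$ built out of identity singletons, with $p\neq q$ on that block; this is precisely what produces the Haar-unitary moments (via the signed Catalan weights in~(\ref{w-mult})), and it is incompatible both with your ``bijection with non-crossing \emph{pair}-partitions on $[r]$'' and with your claim that every surviving term satisfies $\mathcal{E}_N(p,q)=1+o(1)$.

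The paper's proof avoids this by keeping the identity case separate throughout: it imposes $n_k=1$ whenever $\sigma_{k,N}=\mathrm{Id}_N$, proves the auxiliary Lemma~\ref{lemma:aux} with an explicit dichotomy (the $s=2$ case uses Corollary~\ref{cor:sup} and condition~(\ref{2:free}); the $s>2$ case handles chains of identity singletons directly via Lemma~\ref{lemma:structure}), and then establishes a \emph{recursive} factorization~(\ref{m-fc-2}) of the trace rather than a direct enumeration of surviving $(p,q)$. Your separation-by-$k$ argument using~(\ref{2:free}) is essentially what the paper does inside Lemma~\ref{lemma:aux} and in the paragraph deriving $\sigma_{1,N}=\sigma_{x,N}$, so that part of your plan is sound; but to make the whole argument go through you must treat the $U_N$-factors on a separate footing and allow larger $p\vee q$-blocks (with their Catalan weights) on the identity part.
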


 As in Section \ref{notations}, let 
$ n_1, \dots, n_r $
be positive integers,
for 
$ k =1, \dots, r $ 
let
$ \big(\sigma_{k, N})_N $ 
be sequences of permutations either equal to 
$\big (\textrm{Id}_N \big)_N $
or from the set
$ \big\{  \big( \tau_{k, N}\big)_N :\ 1 \leq P \big\} $
and let
$ \theta_1, \theta_2, \dots, \theta_r \in  \{ 1, \ast \} $.
We also assume that 
$ n_k =1 $
whenever
$ \big( \sigma_{ k, N } \big)_N = \big( \textrm{Id}_N \big)_N $.
 Furthermore, as in Section \ref{notations}, let 
 $ m(0) = 0 $ 
 and, for
 $ 1 \leq t \leq r $, 
 put
 $ m(t) = m(t -1) + n_t $;
 denote
 $ m = m(r) $, 
 then define
 $ \varepsilon: [ m ] \rightarrow \{ 1, \ast\} $
 via
 $ \varepsilon(s) = \varepsilon_s = \theta_t $
 whenever
 $ m( t - 1) + 1 \leq s \leq m(t) $.

We shall prove the theorem by applying the results from the previous sections to the moment-free cumulant expansion of 
\begin{align*}
\E \circ \tr
\left(
\big[ (U_N^{n_1})^{\sigma_{1, N}}  \big]^{\theta_1}
\cdot
\big[ (U_N^{n_2})^{\sigma_{2, N}}  \big]^{\theta_2}
\cdots
\big[ (U_N^{n_r})^{\sigma_{r, N}}  \big]^{\theta_r}
\right).
\end{align*}
and utilizing the following Lemma.

\begin{lemma}\label{lemma:aux}
	Under the assumptions of Theorem \ref{main}, suppose that 
$ p, q \in P_2^{\varepsilon}(m) $
are such that
$\displaystyle \lim_{N \rightarrow \infty}
 \mathcal{E}_N(p, q) \neq 0 $. 
Further suppose that 
$ x, y \in [ r ] $
	are such that 
$ x < y $ 
	and, for 
$ a = m(x) $,
$ b = m(y) $,
 the set 
$ S = [ b ] \setminus [ a ] $
	have the property that
$ p(S) = q(S) = S $. 
Then:
\begin{itemize}
	\item[(i)] If 
	$ v \in [ r ] $
	is such that 
	$ \big( \sigma_{v, N} \big)_N 
	\neq \big( \textrm{Id}_N \big)_N $, 
	then there exists some 
	$ w \in [ r ] $
	with
	$ p(S_v) = q(S_v) = S_w $.
	\item[(ii)] The partition
	$ p \vee q $
	is non-crossing.
	\item[(iii)] If 
	$ (i_{a+1}, \dots, i_{ b +1}) \in \mathcal{A}_N^{(p,q)} (S) $,
	then
	$ i_{a+1} = i_{ b+1} $.
\end{itemize}
	
\end{lemma}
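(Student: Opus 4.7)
My plan is to treat (i), (ii), (iii) in sequence, each feeding into the next: part (i) follows from Corollary \ref{cor:sup} by an involution argument, part (ii) rules out crossings via Lemma \ref{lemma:nc} using (i) and Lemma \ref{lemma:1-1} to handle the boundary cases, and part (iii) is an induction on $|S|$ built on the non-crossing structure produced by (ii).

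For part (i), I would apply Corollary \ref{cor:sup} to $v$, obtaining some $w \in [r]$ with $p(S_v) \cup q(S_v) \subseteq S_w$. Since $p$ and $q$ are fixed-point-free involutions, $|p(S_v)| = |q(S_v)| = |S_v|$, so only $|S_w| = |S_v|$ remains. If $\sigma_{w,N}$ also satisfies (\ref{max}), I apply Corollary \ref{cor:sup} to $w$ and use that $p$ is an involution with $p(S_v) \subseteq S_w$ to force $p(S_w) \subseteq S_v$ and hence $|S_w| \leq |S_v|$; otherwise $\sigma_{w,N} = \textrm{Id}_N$, so by the standing convention $n_w = 1$ which immediately forces $|S_v| = |S_w| = 1$. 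For part (ii), assume $p \vee q$ has a crossing; by Lemma \ref{lemma:structure} a minimal crossing may be taken in the form $u_1 < v_1 < u_2 < v_2$ with $u_2 \in \{p(u_1), q(u_1)\}$, $v_2 \in \{p(v_1), q(v_1)\}$, and $\{u_1, u_2\}$, $\{v_1, v_2\}$ in distinct blocks of $p \vee q$. Writing $u_1 \in S_\alpha$, $v_1 \in S_x$, $u_2 \in S_\beta$, $v_2 \in S_y$, Corollary \ref{cor:pq:2}(iii) gives $\alpha \neq \beta$ and $x \neq y$, while the natural order gives $\alpha \leq x \leq \beta \leq y$. The case of four strictly increasing indices falls under Lemma \ref{lemma:nc} and contradicts the non-vanishing hypothesis. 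Each remaining equality case ($\alpha = x$, $x = \beta$, or $\beta = y$) puts two distinct indices into a common $S_k$, so $\sigma_k$ must be non-identity; part (i) then matches that $S_k$ with some $S_\mu$ and Lemma \ref{lemma:1-1} forces the matching to be order-reversing, directly incompatible with the joint order of the $u_i$'s and $v_i$'s.

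For part (iii) I would induct on $|S|$; the base case $x = y$ is trivial. For the inductive step, parts (i), (ii), and the block-closure argument from the proof of Lemma \ref{lemma:1-1} force every block of $p \vee q$ meeting $S$ to have exactly two elements. By non-crossingness, $a + 1$ is paired with some $c \in S$ and the sub-intervals $\{a+2, \ldots, c-1\}$ and $\{c+1, \ldots, b\}$ are each closed under $p$ and $q$. Since $a + 1 = m(x) + 1$ is the first position of $S_{x+1}$, part (i) and Lemma \ref{lemma:1-1} supply a partner $S_z$ with $p(S_{x+1}) = q(S_{x+1}) = S_z$ in reverse order, so $c = m(z)$ and the inner sub-interval is $\{m(x+1)+1, \ldots, m(z-1)\}$. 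The pairing constraints $k_{a+1} = k_c$ and $l_{a+1} = l_c$, read through the boundary formulas for $\vartheta$ in Section \ref{notations}, the bijectivity of $\sigma_{x+1}$ and $\sigma_z$, and the induction hypothesis applied to the inner sub-interval (which supplies $i_{m(x+1)+1} = i_{m(z-1)+1}$), combine to give $i_{m(x)+1} = i_{m(z)+1}$. A second application of the induction hypothesis to the outer sub-interval $\{m(z)+1, \ldots, b\}$ yields $i_{m(z)+1} = i_{b+1}$, and chaining produces $i_{a+1} = i_{b+1}$.

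The main obstacle is the index bookkeeping in part (iii): one must case-split on $\theta_{x+1}, \theta_z \in \{1, *\}$ and on the possible sizes and boundary positions of $S_{x+1}$ and $S_z$, while verifying throughout that each residual sub-interval inherits the closure hypothesis $p(S') = q(S') = S'$ needed to apply the induction. The mechanics closely mirror those of the proofs of Lemmata \ref{lemma:alpha:beta} and \ref{lemma:1-1}, but must be interleaved coherently with the induction on the nested non-crossing structure.
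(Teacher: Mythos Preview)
Your treatment of parts (i) and (ii) is essentially the paper's argument, only spelled out more explicitly (the paper's proof of (ii) is terse and relies on the same combination of Lemma~\ref{lemma:nc}, Corollary~\ref{cor:pq:2}(iii), part (i), and Lemma~\ref{lemma:1-1}).

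Part (iii), however, has a genuine gap. Your inductive step rests on the claim that ``every block of $p\vee q$ meeting $S$ has exactly two elements,'' but this is false under the hypotheses of Theorem~\ref{main}: the setup explicitly allows $\big(\sigma_{k,N}\big)_N=\big(\textrm{Id}_N\big)_N$ (with $n_k=1$), and a block of $p\vee q$ may then have $s>2$ elements, each sitting in a distinct singleton $S_k$ with identity permutation. Part~(i) does not apply to such $S_k$, so it cannot be invoked to produce a partner $S_z$ for $S_{x+1}$ when $\sigma_{x+1}=\textrm{Id}$, and Lemma~\ref{lemma:1-1} is likewise unavailable. The paper handles precisely this $s>2$ case by a separate direct analysis (using Lemma~\ref{lemma:structure} to pin down $p$ and $q$ on the block and reading off the $i$-constraints), and you would need an analogous branch. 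A second, related omission: even in the two-element case you never invoke condition~(\ref{2:free}), yet this is the only tool that forces $\big(\sigma_{x+1,N}\big)_N=\big(\sigma_{z,N}\big)_N$; without it, bijectivity of $\sigma_{x+1}$ and $\sigma_z$ separately does not let you pass from $\sigma_{x+1}(i_{m(x)+1},\cdot)=\sigma_z(i_{m(z)+1},\cdot)$ to $i_{m(x)+1}=i_{m(z)+1}$. Finally, note that the paper's induction peels off an \emph{innermost} block (one with consecutive elements), which guarantees that the associated $S'$ is a run of consecutive $S_k$'s and cleanly separates the $s=2$ and $s>2$ cases; your scheme of splitting at the partner of $a+1$ can be made to work, but only after you add the missing $s>2$ analysis and the appeal to~(\ref{2:free}).
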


\begin{proof}
For (i), Corollary \ref{cor:sup} gives the existence of 
$ w\in [r ] $
such that 
$ p(S_v), q(S_v) \subseteq S_w $. 
If
$ \big( \sigma_{w, N} \big)_N = \big( \textrm{Id}_N \big)_N $, 
then 
$ S_w $ 
is a singleton and the statement is trivial. If
$ \big( \sigma_{w, N} \big)_N $
satisfies condition (\ref{max}), then the property follows from Corollary \ref{cor:sup}.

To show (ii), suppose that $ t_1< t_2 < t_3 < t_4 $ 
is a crossing of 
$ p \vee q $, 
i.e.
 $ t_1, t_3 \in B_1 $ 
and 
$ t_2, t_4 \in B_2 $
for 
$ B_1, B_2 $ 
distinct blocks of 
$ p \vee q $.
For 
$ 1 \leq \nu \leq 4 $,
let
 $ v_\nu \in [ r ] $ 
be such that 
$ i_\nu \in S_{v_\nu } $.
From Corollary \ref{cor:pq:2}(iii), we have that 
$ v_1 \neq v_3 $ 
and
$ v_2 \neq v_4 $.
If 
$ v_1 = v_2 $, 
then
$ S_{v_1} $
is not a singleton, so 
$\big( \sigma_{v_4, N} \big)_N \neq \big( \textrm{Id}_N \big)_N $
and part (i) gives that 
$ p(S_{v_1}) = q(S_{v_1}) = S_{v_3} = S_{v_4} $,
which contradicts Lemma \ref{lemma:1-1}.

For (iii), via a circular permutation of the sets 
$ S_1, S_2, \dots, S_r $,  without loss of generality we can suppose that
$ x =1 $, 
that is 
$ a = 0 $
and 
$ S = [ b ] $.	
We shall prove the result by induction on
$ b $.	
	
If
$ b =2 $, 
then
$ S_1 = \{ 1\}$,
$ S_2 = \{ 2 \}$
and
$ p(1) = q(1) = 2 $.
Hence
$ (i_1, i_2, i_3) \in \mathcal{A}_N^{(p, q)}(S) $ 
implies that
	\begin{align*}
	1 =
	\delta_{k_1}^{k_2} \delta_{l_1}^{l_2} 
	=
	\delta_{\sigma_{1, N} (i_1, i_2)}^{\sigma_{2, N}(i_3, i_2)}.
	\end{align*}
If
$ \sigma_{1, N} \neq \sigma_{2, N} $
then condition (\ref{2:free}) gives that 
$\big|  \mathcal{A}_N^{(p, q)} ( \{1, 2\} ) \big|
 = o(N^2) $
and, according to  Corollary \ref{cor:pq:2}(ii), we have 
$ \displaystyle  \lim_{N \rightarrow\infty}	
 \mathcal{E}_N(p, q) = 0 $.
 Therefore
$ \sigma_{1, N} = \sigma_{2, N} $, 
which gives
$ i_1 = i_3 $, 
that is (iii).

To prove the induction step, remember that  the blocks of 
$ p \vee q $
 have an even number of elements and, since
$ p \vee q $ 
is non-crossing, its restriction to
$ [ b ] $
has at least one block
 with consecutive elements,
 $ B = \{ t + 1, t+ 2, \dots, t + s \} $. 
 From Corollary \ref{cor:pq:2}(iii), there is some 
 $ w \in [ r ] $
 such that
 $ t+ \nu \in S_{ w  +\nu } $
 for
 $ \nu = 1, 2, \dots, s $.
 
 Note that the set 
 $ S^\prime =
  S_{ w +1} \cup S_{w +2} \cup \dots \cup S_{w +s} $
  is also invariant under
  $ p $ 
  and 
  $ q $.
  Indeed, if all 
  $S_{w +\nu } $ 
  are singletons, then 
  $ S ^\prime = B $.
  If
   $ S_{w +\nu }$ 
   is not a singleton for some
   $ \nu \in [ s ] $,
   then
   $ \big( \sigma_{ w+\nu, N } \big)_N \neq \big( \textrm{Id}_N \big)_N $,
   and part (i) of the Lemma gives that 
   $ s = 2 $
   and 
   $ p(S_{w +1}) = q(S_{w +1}) = S_{w+2} $.
   It suffices therefore to show that 
$ S^\prime $
 satisfies property (iii); then the conclusion follows by the induction hypothesis applied to 
the set obtained by removing 
$ S^\prime $
from
$ S $.

If 
$ s =2 $, 
then 
$ S^\prime = S_{w+1} \cup S_{w +2} $.
Denote by 
$ n $  the common value of 
$ 
n_{w +1} $ 
and 
$ n_{w +2} $
and
by
$ \alpha +1 $ 
the first element of 
$ S_{w+1} $.
From Lemma \ref{lemma:1-1}, 
$ p(\nu) = q(\nu) = \alpha+2n + 2 - \nu $
for all
$ \nu =1, \dots, n $. 
Therefore, using the construction of 
$(k_\nu, l_{\nu}) $ 
in Section \ref{notations}, we get
\begin{align*}
\big| \mathcal{A}_N^{( p, q)} (S^\prime & ) \big| 
\leq
\big| \{ 
(i_{\alpha +1}, \dots, i_{\alpha+ 2n + 1 }) 
\in [ N ]^{ 2n + 1}
:\  
( k_{\alpha + \nu} , l_{\alpha + \nu } ) = ( k_{ p(\alpha + \nu)}, l_{ q(\alpha + \nu)} ) ,  1 \leq \nu \leq 2n \}  \big| \\
\leq & \big|
\{
(i_{\alpha +1}, \dots, i_{\alpha+ 2n + 1 }) 
\in [ N ]^{ 2n + 1}
:\ 
\sigma_{w + 1, N} ( i_{\alpha +1}, i_{\alpha + n + 1}) = \sigma_{w + 2, N}( i_{\alpha + 2n + 1}, i_{ \alpha + n +1} ) \\
& \hspace{5.5cm}   \textrm{ and }
i_{ \alpha + \nu } = i_{\alpha +  2n + 2 - \nu } \textrm{ whenever } 
2 \leq \nu \leq n
\}\\
\leq & N^{ n - 2 }
\cdot
\big| \{ (i_{\alpha + 1},
 i_{\alpha + n +1 },
 i_{\alpha + 2n + 1}) 
:\
\sigma_{w +1, N} ( i_{\alpha + 1}, i_{\alpha + n +1} ) = 
\sigma_{w+ 2,N} ( i_{\alpha +  2n + 1 }, i_{\alpha+ n + 1} )
\}
\big|.
\end{align*}

If 
$ \big( \sigma_{w+1, N} \big)_N \neq \big( \sigma_{w+2, N}\big)_N $
or if 
$ i_{\alpha+ 1} \neq i_{\alpha + 2n + 1 } $,
then condition (\ref{2:free})
gives that 
\begin{align*}
\big| \{ (i_{\alpha + 1},
i_{\alpha + n +1 },
i_{\alpha + 2n + 1}) 
:\
\sigma_{w +1, N} ( i_{\alpha + 1}, i_{\alpha + n +1} ) = 
\sigma_{w+ 2,N} ( i_{\alpha +  2n + 1 }, i_{\alpha+ n + 1} )
\}
\big| = o(N^2).
\end{align*}
 On the other hand, according to Lemma \ref{lemma:1-1},
$ B(S_{w+1}: S_{w+2})  = n $, 
so
\begin{align*}
\big| \mathcal{A}_N^{(p, q)} (S_{w+1} \cup S_{w+2})\big|
 = 
o(N^{n+1}) = o(N^{1 + | S_{w+1} \cup S_{w+2}| - B(S_{w+1}:S_{w+2})} )
\end{align*}
and
$ \displaystyle \lim_{N \rightarrow \infty } \mathcal{E}_N (p, q) = 0 $
according to Corollary \ref{cor:pq:2}(ii). Therefore
$ \big( \sigma_{w+1, N} \big)_N = \big( \sigma_{w+2, N}\big)_N $
and
$ i_{\alpha+ 1} = i_{\alpha + 2n + 1 } $,
in particular
$ S^\prime $ 
satisfies (iii).

If 
$ s > 2 $, 
then Corollary \ref{cor:sup} gives that
$ \big( \sigma_{ w + \nu ,  N } \big)_N 
= 
\big( \textrm{Id}_N \big)_N $,
 so each 
$ S_{w + \nu} $ 
is a  singleton, that is 
$ S_{w+\nu} = \{ t + \nu \} $
 and also
$ \theta_{w+1} \neq \theta_{w+2} \neq \dots \neq \theta_{w+s} $. 
   
Suppose first that 
$\theta_{ w + 1} = 1 $. 
Then
$ \theta_{w + 2\nu-1} = 1 $
and
$\theta_{w + 2\nu} = \ast $
for all
$ 1 \leq \nu \leq s/ 2 $.
From Lemma \ref{lemma:structure}, either
$ p(t+1) = t + s $ 
or 
$ q(t+1) = t + s $.
If 
$ q(t+1) = t+s $,
then Lemma \ref{lemma:structure} gives that
$ p(t + 2\nu -1 ) = t+2\nu $
for all
$ 1 \leq \nu \leq s/ 2 $,
while
$ q(t+1) = a + s $
and
$ q( t +2\nu) = t + 2\nu + 1 $, 
for all
$ 1 \leq \nu \leq s/2 -1$.
That is 
$ i_{t+s} = i_{t +2\nu} $
and
$ i_{t+1}= i_{ t+2\nu+1} $
for all
$ 1 \leq \nu \leq s/2 $.
It follows that
\begin{align*}
\big|\mathcal{A}_N^{(p, q)}
 (S_{w +1} \cup \dots \cup S_{w +s}) \big|
\leq N^2
\end{align*}
and  
$ \displaystyle \lim_{N \rightarrow \infty } \mathcal{E}_N (p, q) = 0 $
according to Corollary \ref{cor:pq:2}(ii).
If
$ p(t+1) = t + s $, 
then the conditions
$ (k_{t + \nu}, l_{ t + \nu}) 
= ( k_{ p( t + \nu)}, l_{ q( t + \nu)}) $ 
for
$ 1 \leq \nu \leq s $
are equivalent to
$ i_{ t +1}= i_{ t + s + 1} $ 
and
$ i_{t + 2\nu}= i_{ t + 2 \nu +1} $ 
for
$ 1 \leq \nu \leq s/2-1 $
so property (iii) follows.

The case 
$ \theta_{w+1} = \ast $
is similar.
If 
$ p( t +1) = t + s $, 
then this time Lemma \ref{lemma:structure} gives that
$ q(t + 2\nu -1 ) = a+2\nu $
for all
$ 1 \leq \nu \leq s/ 2 $,
while
$ p ( t +2 \nu ) = t + 2 \nu + 1 $, 
for all
$ 1 \leq \nu \leq s/2 -1$.
That is 
$ i_{t+s} = i_{t +2\nu} $
and
$ i_{t+1}= i_{ a+2\nu+1} $
for all
$ 1 \leq \nu \leq s/2 $,
so again
$ \displaystyle 
\lim_{N \rightarrow \infty } \mathcal{E}_N (p, q) = 0 $.

If
$ q(t+1) = t +s $, 
then again Lemma \ref{lemma:structure} gives that
$ p( a + 2 \nu -1 ) = a + 2 \nu $
for all
$ 1 \leq \nu \leq s/2 $
while
$ q ( t + 2 \nu ) = t + 2 \nu + 1 $,
for all
$ 1 \leq \nu \leq s/2 - 1 $.
That is 
$ i_{ t +1} = i_{ t + s + 1} $
and 
$ i_{t + 2\nu}= i_{ t + 2 \nu +1} $ 
for
$ 1 \leq \nu \leq s/2-1 $,
so again (iii) follows.

\end{proof}

\begin{proof}[Proof of Theorem \ref{main}:]
 We shall show that all mixed free cumulants in 
 $ U_N $,  $ U_N^\ast $,
 and elements of the set 
 $ \big\{
  ( U_N^l )^{ \tau_{k, N}},
   \big[( U_N^l )^{ \tau_{k, N}} \big]^\ast 
    : \  1 \leq l \leq M, \
 1 \leq k \leq P \big\}$
  vanish as
 $ N \rightarrow \infty $ 
 and that, for each 
 $ k \in [ P ] $ 
 and each
  $ l \in [ M ] $,
  \begin{align*}
 \lim_{ N  \rightarrow \infty} \kappa_2
  \big( (U_N^l)^{ \tau_{ k, N } }, 
  [(U_N^l)^{ \tau_{ k, N } } ]^\ast    \big)
 = \lim_{ N \rightarrow \infty } \kappa_2 
 \big( [(U_N^l)^{ \tau_{ k, N } } ]^\ast , 
 (U_N^l)^{ \tau_{ k, N } }
 \big) = 1
  \end{align*} 
  and all other free cumulants of
  $ (U_N^l)^{ \tau_{ k, N }} $ 
and
$ [(U_N^l)^{ \tau_{ k, N } } ]^\ast $
vanish as 
$ N \rightarrow \infty $.
 	
 Denote
 $ V_{k N} = \big[ (U_N^{n_k})^{\sigma_{k, N}}  \big]^{\theta_k}$,
 and, without loss of generality (via a taking adjoints and doing a circular permutation of 
 $ S_1, S_2, \dots, S_r $)  assume that 
 $ \theta_1 = 1 $, 
 $ \big( \sigma_{ 1, N } \big)_N \neq \big( \textrm{Id}_N \big)_N $.
 It suffices then to show that
 \begin{align} \label{m-fc-2}
 \lim_{N \rightarrow \infty }  
 \E \circ \tr &
\left(
 V_{1, N} \cdot V_{2, N} \cdots V_{r, N} \right)\\
 =
& \sum_{ t =2}^r \phi(t) \cdot 
\lim_{ N \rightarrow \infty}
 \E \circ \tr
\left(
V_{2, N}
\cdots
V_{t-1, N}
\right) 
\cdot
\lim_{N \rightarrow \infty }
 \E \circ \tr
\left(
V_{t +1, N}
\cdots
V_{r, N} 
\right), \nonumber
 \end{align}
where
 \begin{align*}
 \phi(t)  = \left\{ 
 \begin{array}{ll}
 1, \textrm{ if } V_{t, N}^\ast = V_{1, N}, \textrm{ that is } n_t = n_1, \sigma_{t, N} = \sigma_{1, N}, \theta_t \neq \theta_1\\
 0, \textrm{ otherwise. }
 \end{array}
 \right.
\end{align*}

 To proceed with the proof, note that, with the notations from \ref{notations}, the left-hand side of equation
  (\ref{m-fc-2}) develops as follows:
 \begin{align} 
 \lim_{ N \rightarrow \infty } \label{E:t}
 \E \circ \tr  
 \left(
 V_{1, N} \cdot  V_{2, N} \cdots V_{r, N} \right)
 = & 
 \lim_{N \rightarrow \infty }
 \sum_{ t = 2 }^r
  \sum_{\substack{p,q\in \mathcal{P}_2^\varepsilon(m)\\ p(1) \in S_t }}
   \mathcal{E}_N  ( p, q ).
 \end{align}
 
Let
 $ p, q \in P_2^{\varepsilon} ( m ) $
be are such that 
 $ \displaystyle \lim_{ N \rightarrow \infty} 
 \mathcal{E}_N (p , q ) \neq 0 $
and let
$ x $
be given by
$ p(1) \in S_x $.
From Lemma \ref{lemma:aux}(i), it follows that 
$ \theta_x = \ast $ 
and
$ n_1 $
and 
$ n_x $
have the same value, that we shall denote by
$ n $.
Moreover, Lemma \ref{lemma:1-1} gives that, for all 
$ \nu \in [ n ] $.
\begin{align}\label{nu:2}
p(\nu) = q ( \nu ) = m(x) + 2 - \nu.
\end{align}
So, using that 
$ p \vee q $ 
is non-crossing, as seen in Lemma \ref{lemma:aux}(ii),
we have that 
$ S_2 \cup S_3 \cup \dots \cup S_{x-1} $
and 
$ S_{x+1} \cup \dots S_r $
are invariant under
$ p $
and
$ q $,
hence Lemma \ref{lemma:aux}(iii) gives that 
$ i_{ n+1} = i_{m(x -1) +1} $
and
$ i_{1} = i_{ m(x) + 1} $.
Using the construction of 
$(k_\nu, l_{\nu}) $ 
in Section \ref{notations}, we then get
\begin{align*}
\big| \mathcal{A}^{ ( p, q)}_N & (S_1 \cup S_x)  \big| 
\leq
\big| \{ 
(i_1, i_2, \dots, i_{n+1},
 i_{m(x-1) +1}, i_{ m(x-1) +2}\dots, i_{m(x)+1} )
 \in [ N]^{ 2n +2 }
:\  \\
 &   \hspace{7.cm } ( k_{\alpha + \nu} , l_{\alpha + \nu } ) = ( k_{ p(\alpha + \nu)}, l_{ q(\alpha + \nu)} ) ,  1 \leq \nu \leq n \}  \big| \\
\leq & \big|
\{
(i_1, i_2, \dots, i_{n+1},
i_{m(x-1) +1}, i_{ m(x-1) +2}\dots, i_{m(x)+1} )
\in [ N]^{ 2n +2 } 
:\   \\
& \hspace{ 0.8cm}
\sigma_{1, N} ( i_{1}, i_{ n + 1}) = \sigma_{x, N}( i_{m(x-1) +1 }, i_{ m(x) +1} )  \textrm{ and }
i_{  \nu } = i_{ m(x) + 2 - \nu } \textrm{ whenever } 
1 \leq \nu \leq n
\}\\
\leq & N^{ n - 2 }
\cdot
\big| \{ (i_{1},
i_{ n +1 },
i_{m(x) + 1}) 
:\
\sigma_{1, N} ( i_{1}, i_{ n +1} ) = 
\sigma_{ x,N} ( i_{ 1 }, i_{n + 1} )
\}
\big|.
\end{align*}
But
$ B(S_1: S_x) = n $,
so, using Corollary \ref{cor:pq:2}(ii) and condition (\ref{2:free}), we have  that 
$\displaystyle \lim_{N \rightarrow \infty}
 \mathcal{E}_N ( p, q) = 0 $
 unless
 $ \big( \sigma_{1, N} \big)_N 
 = \big( \sigma_{x, N} \big)_N $.
 We conclude that the summands in the right-hand side of 
equation (\ref{E:t}) vanish as
$ N \rightarrow \infty $
unless
$ \phi(t) = 1 $,
 that is
\begin{align}\label{E:t:2}
 \lim_{ N \rightarrow \infty } 
\E \circ \tr  
\left(
V_{1, N} \cdot  V_{2, N} \cdots V_{r, N} \right)
=
 \lim_{N \rightarrow \infty }
\sum_{ t = 2 }^r
\phi(t) \cdot  \big[
\sum_{\substack{p,q\in \mathcal{P}_2^\varepsilon(m)\\ p(1)\in S_t }}
\mathcal{E}_N  ( p, q )
\big].
\end{align}
 
Moreover, let us define 
$ \varepsilon^\prime : [ m(x-1) - m(1)] \rightarrow \{ 1, \ast\} $
and
$ \varepsilon^{\prime \prime}: [ m - m(x+1)] \rightarrow 
\{ 1, \ast\} $
given by
$ \varepsilon^\prime (\nu) = \epsilon ( \nu + m(1) ) $
and
$ \varepsilon^{\prime\prime} ( \nu )
 = \epsilon ( m(x+1) + \nu )$.
For
$ p_1 \in P_2^{ \varepsilon^\prime} ( m(x-1) - m(1) ) $
and
$ p_2 \in P_2^{ \varepsilon^{ \prime \prime}} ( m - m(x)) $
we define
$ p_1 \diamond p_2 \in P_2^{ \varepsilon} (m ) $
given by
\begin{align*}
p_1\diamond p_2 (\nu) = \left\{ 
\begin{array}{ll}
m(x) + 2 - \nu & \ \textrm{ if } \nu \in S_1 \cup S_x \\
m( 1) + p_1 ( \nu) & \  \textrm{ if } \nu \in S_2 \cup \dots \cup S_{x-1}   \\
m(x) + p_2(\nu)  & \ \textrm{ if } \nu \in S_{x+1} \cup \dots \cup S_r.
\end{array}
\right.
\end{align*}

As seen above, if 
$ p(1) \in S_x $
then
$ \displaystyle \lim_{ N \rightarrow \infty } \mathcal{E}_N ( p, q)  = 0 $
unless
$ p = p_1 \diamond p_2 $
and
$ q = q_1 \diamond q_2 $
for some 
$ p_1, q_1 \in P_2^{ \varepsilon^\prime} ( m(x-1) - m(1)) $
and
$ p_2, q_2 \in P_2^{ \varepsilon^{\prime \prime}}
( m - m(x)) $. 
Therefore
\begin{equation*}
 \lim_{ N \rightarrow \infty } 
\E \circ \tr  
\left(
V_{1, N} \cdot  V_{2, N} \cdots V_{r, N} \right)
=
\lim_{N \rightarrow \infty }
\sum_{ x = 2 }^r
\phi(x) \cdot  \big[
\sum_{\substack{p_1,q_1\in \mathcal{P}_2^{\varepsilon^\prime} (m(x-1) - m(1))\\
p_2, q_2 \in P_2^{ \varepsilon^{ \prime \prime} }
( m - m(x))}}
\mathcal{E}_N  ( p_1 \diamond p_2, q_1 \diamond q_2 )
\big].
\end{equation*}

On the other hand,
\begin{align*}
\E \circ \tr
\left(
V_{2, N}
\cdots
V_{x-1, N}
\right)
 & = 
 \sum_{
 	p_1, q_1 \in P_2^{ \varepsilon^{ \prime}} (m(x-1)- m(1)) } 
 \mathcal{E}_N ( p_1, q_1)
 \\
\E \circ \tr
\left(
V_{x +1, N}
\cdots
V_{r, N} 
\right)
& = 
\sum_{
 p_2, q_2 \in P_2^{ \varepsilon^{ \prime \prime}} (m- m(x)) } 
\mathcal{E}_N ( p_2, q_2),
\end{align*}
so it suffices to show that 
 \begin{align}\label{last}
 \lim_{N \rightarrow \infty }
 \big[ 
 \mathcal{E}_N ( p_1 \diamond p_2, q_1 \diamond q_2)
 - \mathcal{E}_N ( p_1, q_1) 
 \cdot
 \mathcal{E}_N ( p_2, q_2)
 \big]
 = 0,
 \end{align}
 with the convention that 
$ \mathcal{E}( p_1, q_1) = 1 $
if
$ [ m(x-1) ] \setminus [ m(1)] = \emptyset $
and
$\mathcal{E}( p_2, q_2) = 1$
if  
$ [ m ] \setminus [ m (x) ] = \emptyset $.

To prove (\ref{last}), we shall use Lemma \ref{lemma:aux}(iv)  the multiplicative property of the leading term of the Wiengarten function, as mentioned in Preliminaries.

First, note that the restriction of 
$ ( p_1 \diamond p_2 ) \vee ( q_1 \diamond q_2 ) $ 
to 
$ S_1 \cup S_x $
consists on 
$ n $ 
blocks, each with
$ 2 $ 
elements, therefore we have  that
\begin{align*}
 | ( p_1 \diamond p_2 ) \vee ( q_1 \diamond q_2 )| = | p_1 \vee q_1| + | p_2 \vee q_2 | + n 
\end{align*}
and equation (\ref{w-mult}) from Section \ref{weingarten} gives
\begin{align*}
\textrm{Wg}_N (  p_1 \diamond p_2 , & q_1 \diamond q_2 ) \\
 & = \textrm{Wg}_N (p_1, q_1) 
\cdot 
\textrm{Wg}_N (p_2, q_2) 
\cdot
N^{ -n } 
+ O(N^{ -m +
	 |(  p_1 \diamond p_2 ) \vee ( q_1 \diamond q_2 ) | -2 }).
\end{align*}
with 
$ \textrm{Wg}_N (p_1, q_1) 
\cdot 
\textrm{Wg}_N (p_2, q_2) 
\cdot
N^{ -n } = O(N^{ -m + |(  p_1 \diamond p_2 ) \vee ( q_1 \diamond q_2 ) | }) $.

But 
$ \displaystyle
 \mathcal{E}_N ( p, q) = \frac{1}{N}\textrm{Wg}_N (p, q) \cdot 
\big| \mathcal{A}_N^{ (p, q)} \big| = O(N^0) $, 
therefore to prove (\ref{last}) it suffices to show that
\begin{align}\label{f-f-f-n}
\big| \mathcal{A}_N^{(p_1 \diamond p_2, q_1 \diamond q_2)}\big| 
= 
N^{ n-1} \cdot
\big| \mathcal{A}_N^{(p_1, q_1)} \big|
\cdot 
\big| \mathcal{A}_N^{(p_2, q_2)} \big|.
\end{align}

Remember that 
\begin{align*}
\mathcal{A}_N^{ ( p_1 \diamond p_2,  q_1 \diamond q_2 ) }
&=
\big\{ (i_1, \dots, i_m) :\ (k_\nu, l_{\nu}) = (k_{ p_1 \diamond p_2(\nu)}, l_{ q_1 \diamond q_2( \nu )})
\textrm{ for all } \nu \in [m]
\big\}\\
  =  &
 \big\{ (i_1, \dots, i_m) :\
  (k_\nu, l_{\nu}) 
 =
 \left\{ 
 \begin{array}{ll}
 (k_{ m+2 - \nu}, l_{ m+2 - \nu} ) & \textrm{ if } 
 \nu \in S_1 \cup S_x \\
 ( k_{ p_1(\nu)}, l_{ q_1(\nu) } )  & \textrm{ if }
 \nu \in [ m(x-1)] \setminus [m(1)] \\
  ( k_{ p_2(\nu)}, l_{ q_2(\nu) } )  & \textrm{ if }
 \nu \in [ m] \setminus [m(x)]
 \end{array}
 \right.
 \ \big\}.
\end{align*}

For
$ \nu \in S_1 \cup S_x $, 
the conditions 
$
(k_\nu, l_{\nu}) = (k_{ m+2 - \nu}, l_{ m+2 - \nu} ) $
are equivalent to 
$ i_{\nu} = i_{ m(x)+2 - \nu } $
for
$ 2 \leq \nu \leq m(1) $
and
$ \sigma_{ 1, N} ( i_1, i_{m(1) +1}) = 
\sigma_{x, N} ( i_{ m(x) +1}, i_{m(x-1) + 1}) $,
but, as shown above,
$ \sigma_{ 1, N} = \sigma_{x, N} $,
so 
$ i_{\nu} = i_{ m(x) +2 -\nu} $
for all $ \nu \in [ m ( 1 ) + 1 ] $.

On the other hand, the definition of 
$ \mathcal{A}_N^{(p, q)} $
gives that the conditions
\begin{align*}
\left\{
\begin{array}{ll}
(k_\nu, l_{\nu}) = (k_{ p_1 (\nu)}, l_{ q_1 ( \nu )})  & \textrm{ for all } \nu \in [ m(x -1)] \setminus [ m (1) ] \\
i_{ m(x - 1) + 1} = i_{ m(x) +1}
\end{array}
 \right.
\end{align*}
are equivalent to 
$ (i_{m(x-1)}+1, i_{m(x-1)} + 2, \dots, i_{m(x)}) 
\in \mathcal{A}_N^{(p_1, q_1)} $. 
Similarly, the conditions
\begin{align*}
\left\{
\begin{array}{ll}
(k_\nu, l_{\nu}) = (k_{ p_2 (\nu)}, l_{ q_2 ( \nu )})  & \textrm{ for all } \nu \in [ m] \setminus [ m (x) ] \\
i_{ m(x) + 1} = i_{1}
\end{array}
\right.
\end{align*}
are equivalent to 
$ (i_{m(x)}+1, i_{m(x)} + 2, \dots, i_{m}) 
\in \mathcal{A}_N^{(p_2, q_2)} $.

Therefore
$ (i_1, \dots, i_m) \in \mathcal{A}_N^{( p_1 \diamond p_2, 
	q_1 \diamond q_2 )} $
if and only if the following conditions are satisfied:
\begin{enumerate}
	\item[$\bullet$]
	 $ (i_{m(1) +1}, i_{ m(1) + 2}, \dots, i_{ m(x-1)})
	\in \mathcal{A}_N^{(p_1, q_1)} $
	\item[$\bullet$]
	$ (i_{m(x) +1}, i_{ m(x) + 2}, \dots, i_{ m})
	\in \mathcal{A}_N^{(p_2, q_2)} $
	\item[$\bullet$] 
	$ i_1 = i_{ m(x) + 1} \textrm{ and } 
	i_{ m (x-1) + 1 } = i_{ m(1) + 1 } $
	\item[$\bullet$]
	$ i_\nu = i_{ m +2 - \nu } \textrm{ whenever }
	2 \leq \nu \leq m(1) =n. $
\end{enumerate}
%
It follows that 
$
\big| 
\mathcal{A}_N^{( p_1 \diamond p_2, q_1\diamond q_2 )}
 \big| 
 =  
 \big| 
 \mathcal{A}_N^{( p_1 , q_1  )}
 \big|
 \cdot
\big| 
\mathcal{A}_N^{( p_2 , q_2 )}
\big| 
\cdot 
N^{ n -1},
$
which is (\ref{f-f-f-n}). 
\end{proof}

\section{Particular Cases}

We will apply the results from Section 4 to some particular classes of entry permutations. In particular, it will be shown that the original question from \cite{malizy} has an affirmative answer.

\subsection{Partial transposes.} \label{subsect:4:1} The 
$( b, d) $ -partial transpose 
is an entry permutation of matrices of size
$ bd \times bd $
defined as follows. 
Define first the bijection
$ \varphi_{b, d}: 
[ b \cdot d ]^2 \rightarrow \big( [ b ] \times [ d ] \big)^2 $
given by
$ \varphi_{ b, d } ( i, j) 
= ( \alpha_1, \beta_1, \alpha_2, \beta_2 ) $
whenever
\begin{align*}
(i, j) = \big(
( \alpha_1 - 1 )d + \beta_1, ( \alpha_2 -1 ) d + \beta_2 
\big).
\end{align*}
Then define
$ \rho: \big( [ b ] \times [ d ] \big)^2 
\rightarrow
\big( [ b ] \times [ d ] \big)^2 $
by
\begin{align*}
\rho ( \alpha_1, \beta_1, \alpha_2, \beta_2 ) = 
( \alpha_1, \beta_2, \alpha_2, \beta_1).
\end{align*}
The $ (b, d) $-partial transpose is the permutation 
\begin{align*}
\Gamma_{b, d} =
\varphi_{b, d}^{ -1} \circ \rho \circ \varphi_{ b, d }.
\end{align*}

A more intuitive description of the partial transpose is given by seeing each
$ bd \times bd $
square matrix  as a 
$ b \times b $
matrix whose entries are
$ d \times d $ 
block matrices.
Its 
$ (b, d) $-partial transpose is obtained by taking the matrix transpose of each of the 
$ b^2 $ 
blocks, but keeping the position of the blocks.

We shall consider first one sequence of partial transposes. So let us suppose that
$ \big( b_N \big)_N $
and
$ \big( d_N \big)_N $
are two increasing sequences of positive integers 
such that 
$ \displaystyle  \lim_{N \rightarrow\infty } b_N \cdot d_N  = \infty  $.
For fixed 
$ i = (\alpha_1 -1) d + \beta_1 $ 
and 
$ i = ( \alpha_2 -1) d + \beta _2 $,
we have that
\begin{align*}
\sum_{ \alpha =1, 2 }
| \big\{ \nu \in [ N ] :\  i \in \{ \pi_\alpha\circ \Gamma_{ b, d} (\nu, j),  \pi_\alpha \circ \Gamma_{b, d} ( j, \nu) \} \big\}|
= 2 ( \delta_{\alpha_1}^{\alpha_2} \cdot d + \delta_{\beta_1}^{\beta_2} \cdot b)
\end{align*}
so the sequence 
$ \big( \Gamma_{b_N, d_N } \big)_N $
satisfies condition (\ref{max}) if and only if 
$ \displaystyle \lim_{ N  \rightarrow \infty} b_{N}
= \lim_{ N  \rightarrow \infty} d_N = \infty $.
Moreover, the last relation also implies that the sequences
$ \big( \textrm{Id}_N \big)_N $
and 
$ \big( \Gamma_{b_N, d_N } \big)_N $
satisfy relation (\ref{2:free}), since
\begin{align*}
\big|  \big\{ (i_1, i_2, i_3) : \ \Gamma_{b, d} (i_1, i_2) \in \{ (i_1, i_3), (i_3, i_2) \}
\big\}
\big|  = b d^2 + b^2 d.
\end{align*}
Therefore, Theorem \ref{main} implies the following.

\begin{corollary}
If
 $ \big( b_N \big)_N $
and
$ \big( d_N \big)_N $
are two increasing sequences of positive integers 
such that
$ \displaystyle \lim_{ N  \rightarrow \infty} b_{N}
= \lim_{ N  \rightarrow \infty} d_N = \infty $,
then, for any positive integer 
$ P $,
the family
$ U_{ b_N d_N}^{ \Gamma_{b_N, d_N}} $, 
	$ \big( U_{ b_N d_N}^2 \big)^{ \Gamma_{b_N, d_N} }$,
	 $ \dots $
	$\big( U_{ b_N d_N}^P \big)^{ \Gamma_{b_N, d_N}	} 
	$
	is asymptotically free circular and free from
 $ U_{ b_N d_N } $.	
\end{corollary}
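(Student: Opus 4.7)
The plan is to apply Theorem \ref{main} directly, with the single permutation sequence $\tau_{1, N} = \Gamma_{b_N, d_N}$ and with the parameter $M$ of the theorem taken to be the integer $P$ appearing in the statement of the corollary. Because only one sequence of permutations enters the family, the ``distinct pairs'' hypothesis (\ref{2:free}) within the family is vacuous; the only substantive conditions to check are (\ref{max}) for $\Gamma_{b_N, d_N}$ and (\ref{2:free}) between $(\mathrm{Id}_N)_N$ and $(\Gamma_{b_N, d_N})_N$. The latter is implicit in the proof of Theorem \ref{main}, where the sequences $\sigma_{k, N}$ may be either $\tau_{k, N}$ or the identity, and equalities of the form $\sigma_{1, N} = \sigma_{x, N}$ among mixed choices are detected precisely by counts of the type appearing in (\ref{2:free}).

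First I would verify (\ref{max}) by a direct count using the explicit description of $\Gamma_{b_N, d_N}$. Writing $a = (\alpha_1 - 1) d_N + \beta_1$ and $j = (\alpha_2 - 1) d_N + \beta_2$ under the bijection $\varphi_{b_N, d_N}$, the swap rule $\rho(\alpha_1', \beta_1', \alpha_2', \beta_2') = (\alpha_1', \beta_2', \alpha_2', \beta_1')$ makes it transparent that the count
\[
\sum_{\alpha = 1, 2} |\{ \nu \in [N] :\ a \in \{ \pi_\alpha \circ \Gamma_{b_N, d_N}(\nu, j),\ \pi_\alpha \circ \Gamma_{b_N, d_N}(j, \nu) \} \}|
\]
equals $2(\delta_{\alpha_1}^{\alpha_2} d_N + \delta_{\beta_1}^{\beta_2} b_N) \leq 2(b_N + d_N)$. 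Dividing by $N = b_N d_N$ and taking supremum over $(a, j)$ yields the bound $2(b_N^{-1} + d_N^{-1})$, which vanishes in the limit precisely because $b_N, d_N \to \infty$.

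Second I would verify (\ref{2:free}) for the pair $(\mathrm{Id}_N, \Gamma_{b_N, d_N})$: the event $\Gamma_{b_N, d_N}(i_1, i_2) \in \{(i_1, i_3), (i_3, i_2)\}$ forces one coordinate of $\Gamma_{b_N, d_N}(i_1, i_2)$ to be pinned to a component of $(i_1, i_2, i_3)$, and the swap rule $\rho$ then constrains one of the two factor pairs in the $\varphi_{b_N, d_N}$-decomposition. A straightforward enumeration gives at most $b_N d_N^2 + b_N^2 d_N$ such triples, so the ratio with $N^2 = (b_N d_N)^2$ is $b_N^{-1} + d_N^{-1} \to 0$. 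With both conditions established, the conclusion is an immediate application of Theorem \ref{main}; no real obstacle remains at this stage, as the combinatorial content has already been absorbed into Section 3 and Theorem \ref{main}, and the role of this proof is essentially bookkeeping plus the two coordinate counts above.
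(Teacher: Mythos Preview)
Your proposal is correct and follows essentially the same route as the paper: verify condition (\ref{max}) via the count $2(\delta_{\alpha_1}^{\alpha_2}d_N+\delta_{\beta_1}^{\beta_2}b_N)$, verify (\ref{2:free}) between $(\mathrm{Id}_N)_N$ and $(\Gamma_{b_N,d_N})_N$ via the count $b_Nd_N^2+b_N^2d_N$, and then invoke Theorem~\ref{main} with a single permutation sequence. Your observation that the need for the Id--$\Gamma$ instance of (\ref{2:free}) arises inside the proof of Theorem~\ref{main} (rather than from its stated hypotheses) matches exactly what the paper does: it checks this extra condition in the discussion preceding the corollary before invoking the theorem.
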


 For the case of several diferent sequences of partial transposes, \cite{wishart2} gives a rather simple way to check condition (\ref{2:free}). More precisely, part of Theorem 3.2 from \cite{wishart2} states the following property:
 
 \textit{ If }
 	$ b \cdot d = b^\prime \cdot d^\prime $, 
\textit{ then the relation }
 	$ \Gamma_{ b, d} (i, j) \in 
 \{ \Gamma_{b^\prime , d^\prime} (i, k) ,
 \Gamma_{b^\prime , d^\prime} (k, j) \}	$
 \textit{ is equivalent to }
 $\Gamma_{ b, d } (i, j) = \Gamma_{b^\prime , d^\prime} (i, j) $.
 
 Furthermore, Theorem 3.2 and Theorem 4.8 from \cite{wishart2} give a more practical manner to check condition (\ref{2:free}), stated below.
 
 \textit{Let }
 	$ \big(b_N \big)_N $,
 	$ \big(d_N \big)_N $,
 	$ \big(b_N^\prime \big)_N $,
 	$ \big(d_N^\prime \big)_N $ 
 	\textit{	be increasing sequences of positive integers such that }
 	$ b_N \cdot d_N = b_N^\prime  \cdot d_N^\prime 	$
 	\textit{ for each }
 	$ N $. 
\textit{ The following relations are then equivalent:}
\begin{enumerate}
	\item[(c.1)]
$ \displaystyle 
\lim_{N \rightarrow \infty} \frac{1}{N^2}
\big| \big\{ (i, j) \in [ b_N \cdot d_N]^2 :\
\Gamma_{ b_N, d_N } (i, j) = \Gamma_{b_N^\prime , d_N^\prime} (i, j)
\big\}\big| = 0 $	
	\item[(c.2)]\textit{ If }
	$ Q_{N} = \min\{ d_{N}, d_N^\prime \} \cdot L_{ N} $ 
	\textit{ is the least common multiple of 
		the positive integers }
	$ d_{N} $ 
	\textit{ and }
	$ d_{N}^\prime $,
	\textit{ then }
	$ \displaystyle \lim_{N \rightarrow \infty }
	L_{N} = \infty $.
\end{enumerate}

\noindent (For example, each two of the following sequences satisfy condition (\ref{2:free}):
$ \big( \Gamma_{N, N^{p-1}} \big)_N $,
$ \big( \Gamma_{N^2, N^{p-2}} \big)_N $,
$ \dots $,
$ \big( \Gamma_{N^{p-1}, N} \big)_N $.)

Henceforth, Theorem \ref{main} gives the following corollary, which extends the results from \cite{partial-transpose-haar}:

\begin{corollary}\label{cor:4:2}
	Suppose that for each 
$ i = 1, 2, \dots, n $
there exist two increasing and unbounded sequences of positive integers
$ \big(b_{i, N } \big)_N $
and
$ \big(d_{i, N} \big)_N $
such that:
\begin{enumerate}
	\item[(i)]$ b_{k, N} \cdot d_{k, N} =
	b_{l, N} \cdot d_{l, N} $
	for each 
	$ k, l \in [ n ] $
	and each $ N $
	\item[(ii)] if 
	$ k \neq l $,
	then the sequences
	$ \big(b_{k, N } \big)_N $,
	$ \big(d_{k, N} \big)_N $,
	and
	$ \big(b_{l, N} \big)_N $,
	$ \big(d_{l, N} \big)_N $
	satisfy condition \emph{(c.2)} from above.
\end{enumerate}

Then the family 
$ \big\{  \big( U_{ b_{i, N} \cdot d_{i, N}}^k 
	\big)^{ \Gamma_{ b_{i, N}, d_{i, N} }
	} :\ 
		i \in [ n], k \in [ P ] 
\big\}$
is asymptotically fee circular and free from 
$ \big( U_{ b_{i, N} \cdot d_{i, N} } \big)_N $.

\end{corollary}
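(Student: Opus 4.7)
The plan is to deduce Corollary \ref{cor:4:2} directly from Theorem \ref{main} by taking the family of permutation sequences $(\tau_{i,N})_N = (\Gamma_{b_{i,N}, d_{i,N}})_N$ for $i \in [n]$. To do so, I must verify the two hypotheses of Theorem \ref{main}: first, that each individual sequence $(\Gamma_{b_{i,N}, d_{i,N}})_N$ satisfies condition (\ref{max}); second, that for each pair of distinct indices $k \neq l$, the two sequences jointly satisfy condition (\ref{2:free}).

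For the first verification, the discussion preceding Corollary 4.1 already carries out the relevant combinatorial count for a single sequence of partial transposes: for fixed $i = (\alpha_1 - 1)d + \beta_1$ and $j = (\alpha_2 -1)d + \beta_2$, one computes
\[
\sum_{\alpha =1,2} \big| \{ \nu \in [N] : i \in \{ \pi_\alpha \circ \Gamma_{b,d}(\nu, j), \pi_\alpha \circ \Gamma_{b,d}(j,\nu)\}\} \big| = 2(\delta_{\alpha_1}^{\alpha_2} d + \delta_{\beta_1}^{\beta_2} b),
\]
which, after dividing by $N = b_N d_N$, tends to $0$ precisely because both $b_{i,N}$ and $d_{i,N}$ tend to infinity by hypothesis. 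So condition (\ref{max}) holds for every $i$.

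For the second verification, the key is the characterization quoted from \cite{wishart2}. When $b_{k,N} d_{k,N} = b_{l,N} d_{l,N}$ (hypothesis (i)), the statement from Theorem 3.2 of \cite{wishart2} reduces the membership condition $\Gamma_{b_{k,N}, d_{k,N}}(i,j) \in \{\Gamma_{b_{l,N}, d_{l,N}}(i,\kappa), \Gamma_{b_{l,N}, d_{l,N}}(\kappa,j)\}$ to the simpler equation $\Gamma_{b_{k,N}, d_{k,N}}(i,j) = \Gamma_{b_{l,N}, d_{l,N}}(i,j)$. Consequently condition (\ref{2:free}) collapses to condition (c.1), and then Theorems 3.2 and 4.8 of \cite{wishart2} show that (c.1) is equivalent to (c.2), which is hypothesis (ii). Thus condition (\ref{2:free}) holds pairwise.

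With both hypotheses of Theorem \ref{main} verified, the conclusion of the theorem applies verbatim and yields the claimed asymptotic free circularity and freeness from the ambient Haar unitary. The only non-routine ingredient is the reduction of (\ref{2:free}) to (c.1), but this is entirely external: it is merely the invocation of the cited results from \cite{wishart2}, so no further combinatorial work is required here beyond checking that the dimensions match (hypothesis (i)) so that the characterization from \cite{wishart2} is applicable.
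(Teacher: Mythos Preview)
Your proposal is correct and follows essentially the same route as the paper: the text preceding the corollary verifies condition (\ref{max}) via the count $2(\delta_{\alpha_1}^{\alpha_2} d + \delta_{\beta_1}^{\beta_2} b)$ together with the unboundedness of $(b_{i,N})_N$ and $(d_{i,N})_N$, and then reduces condition (\ref{2:free}) to (c.1) (hence (c.2)) by invoking the cited results from \cite{wishart2}, after which Theorem \ref{main} applies directly. There is nothing to add.
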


\subsection{Mixing map.} With the notations from Subsection \ref{subsect:4:1}, the ``mixing map'' from \cite{mandolino-zycz-lin} is defined as the permutation 
$ \mu_N : [ N^2 ]\times [ N^2] \rightarrow [N^2] \times [ N^2] $
given by
 \begin{align*}
 \mu_N  = \varphi_{N, N}^{ -1} \circ \psi_N \circ \varphi_{N, N}
 \end{align*}
 where
$ \varphi_{N, N}: 
[ N^2 ]^2 \rightarrow  [ N ]^4 $ 
is given by
$ \varphi_{ N, N} ( i, j) 
= ( \alpha_1, \beta_1, \alpha_2, \beta_2 ) $
whenever
\begin{align*}
(i, j) = \big(
( \alpha_1 - 1 )N + \beta_1, ( \alpha_2 -1 ) N + \beta_2 
\big),
\end{align*}
and
$ \psi_N: [ N]^4 \rightarrow [ N ]^4 $ 
is given by
$ \psi_N ( \alpha_1, \beta_1, \alpha_2, \beta_2)
= ( \alpha_1, \alpha_2, \beta_1, \beta_2) $.

Fix 
$ \alpha_1, \alpha_2, \beta_1, \beta_2 \in [ N ] $.
For 
$ a = (\alpha_1 -1)N + \beta_1 $
and
$ b = (\alpha_2 -1) N + \beta_2 $,
we have that
\begin{align*}
\big| \big\{
\nu \in [N^2]:\ a \in \{ \pi_k \circ \nu_N (\nu, b), 
\pi_k\circ\nu_N (b, \nu):\ k=1, 2\} 
\big\} \big| 
&= N( \delta_{\alpha_2}^{\beta_2} +
\delta_{\alpha_2}^{\alpha_1} +
\delta_{\beta_2}^{\beta_1} +
\delta_{\beta_2}^{\alpha_1}
)\\
& \leq 4N = o(N^2),
\end{align*}
so 
$ \mu_N $ 
satisfies condition (\ref{max}).

Also, the pair of sequences
$ \big( \mu_N \big)_N $
and
$ \big( \textrm{Id}_{N^2} \big)_{N^2} $
satisfy condition (\ref{2:free}), because
\begin{align*}
\big|  \big\{ & (i_1,  i_2, i_3) \in [ N^2]^3:\ 
\mu_N (i_1, i_2) \in \{ ( i_1, i_3), (i_3, i_2)\}
\big\}
\big|\\
 & = \big|\big\{
 (\alpha_k, \beta_k)_{1\leq k \leq 3} \in [ N ]^6:\
 (\alpha_1, \alpha_2, \beta_1, \beta_2) \in 
 \{ (\alpha_1, \beta_3, \alpha_3, \beta_1),
 (\alpha_3, \beta_2, \alpha_2, \beta_3)
\}
 \big\}
 \big|\\
 &= O(N^3)=o((N^2)^2).
\end{align*}

Henceforth Theorem \ref{main} gives the following result.
\begin{corollary}\label{cor:4:3}
 For any positive integer $ P$, the family
 $ \big\{  \big( U_{N^2}^k \big)^{ \mu_N }:\  1 \leq k \leq P \big\} $
 is asymptotically free circular and free from 
 $ \big(U_{N^2}\big)_N $.
\end{corollary}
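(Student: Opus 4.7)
\emph{Proof plan for Corollary \ref{cor:4:3}.}
The strategy is to apply Theorem \ref{main} with a single sequence of permutations: take $P = 1$ in the notation of the theorem, set $\tau_{1, N} = \mu_N$, and let the $M$ of the theorem equal the integer $P$ appearing in the corollary. With only one permutation in the family, the pairwise condition (\ref{2:free}) is vacuous, so the only hypothesis that needs verification is that $(\mu_N)_N$ satisfies condition (\ref{max}).

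To establish (\ref{max}) I would unpack $\mu_N$ through the bijection $\varphi_{N,N}$ that already appears in the definition of the mixing map. Fixing $a = (\alpha_1 - 1)N + \beta_1$ and $b = (\alpha_2 - 1)N + \beta_2$ in $[N^2]$, and parametrising $\nu \in [N^2]$ by $\nu = (\gamma - 1)N + \delta$, one reads off the four coordinates $\pi_\alpha \circ \mu_N(\nu, b)$ and $\pi_\alpha \circ \mu_N(b, \nu)$ for $\alpha = 1, 2$: each of them depends on only two of the underlying $[N]$-variables, one among $\{\gamma, \delta\}$ and one among $\{\alpha_2, \beta_2\}$. Requiring $a$ to coincide with any such coordinate therefore pins one of $\gamma, \delta$ to a value in $\{\alpha_1, \beta_1, \alpha_2, \beta_2\}$ and leaves at most $N$ free choices for the other. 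Summing over the four coordinate slots and two values of $\alpha$, one obtains at most $4N$ admissible $\nu$ uniformly in $(a, b)$, so dividing by $N^2$ yields the limit $0$ required by (\ref{max}).

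Once (\ref{max}) is in hand, Theorem \ref{main} immediately yields that the family $\{(U_{N^2}^k)^{\mu_N} : 1 \leq k \leq P\}$ is asymptotically free circular of variance one and asymptotically free from $(U_{N^2})_N$, which is exactly the statement of the corollary. There is no genuine obstacle at this stage: the combinatorial Weingarten analysis, the asymptotic boundedness estimates, and the structural lemmas of Sections 3 and 4 are all packaged inside Theorem \ref{main}; the only arithmetic to carry out separately is the elementary index count sketched above. If one preferred to avoid relying on the vacuousness of (\ref{2:free}) in the single-permutation case, the same coordinate description of $\mu_N$ gives the pair $(\mu_N, \mathrm{Id}_{N^2})$ the bound $O(N^3) = o((N^2)^2)$ by essentially the same counting argument.
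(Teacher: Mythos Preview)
Your approach is essentially the same as the paper's: verify condition (\ref{max}) for $(\mu_N)_N$ via the coordinate parametrisation (the paper obtains the identical bound $4N = o(N^2)$) and then invoke Theorem \ref{main}. One small discrepancy worth noting: the paper does \emph{not} treat condition (\ref{2:free}) as vacuous; it explicitly checks that the pair $(\mu_N)_N$, $(\mathrm{Id}_{N^2})_N$ satisfies (\ref{2:free}) by the $O(N^3)=o((N^2)^2)$ count you sketch at the end. Since your final paragraph already supplies exactly this verification, your proposal covers what the paper actually does, but you should promote that computation from an optional remark to part of the argument proper.
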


\begin{remark}
Suppose that 
$ \big( b_N \big)_N $, $ \big( d_N \big)_N $
are two increasing, unbounded sequences of positive integers such that
$ b_N \cdot d_N = N^2 $ 
for each 
$ N $.
Then the family
$ \big\{  
\big\{  \big( U_{N^2}^k \big)^{ \mu_N },
\big\{  \big( U_{N^2}^k \big)^{ \Gamma_{b_N, d_N} }
:\ 1 \leq k \leq P
\big\}$
is asymptotically free circular and free from 
$ \big(U_{N^2}\big)_N $.

Indeed, from the arguments above it suffices to show that the pair of sequences
$ \big( \mu_N \big)_N $ 
and
$ \big( \Gamma_{ b_N, d_N } \big)_N $
satisfies condition (\ref{2:free}).

For
$ i_1, i_2, i_3 \in [ N^2 ] $ 
and let 
$ \alpha_1, \alpha_2, \beta_1, \beta_2 \in [ N ] $,
$ a_1, a_3 \in [ b_N] $,  
and
$ b_1, b_2 \in [ d_N ] $
be such that
\begin{align*}
i_1 &= (\alpha_1 - 1)N + \beta_1 = (A_1 -1)d_N + B_1\\
i_2 & = ( \alpha_2 -1) N + \beta_2 \\
i_3 &= (A_3 -1)d_N + B_3.
\end{align*}
The equality 
$ \mu_N (i_1, i_2) = \Gamma_{ b_N, d_N } ( i_1, i_3) $
is then equivalent to 
\begin{align*}
(\alpha_1 - 1)N + \alpha_2 &= (A_1 -1) d_N + B_3\\
( \beta_1  - 1)N + \beta_2 & = (A_3 -1) d_N + B_1
\end{align*}
so, for 
$ A_3 $ 
to be integer, we need 
$ \beta_2 \equiv B_1 - ( \beta_1 -1)N   (\mod d_N) $, therefore
\begin{align*}
\big| \big\{ (i_1, i_2, i_3) \in [ N^2]^3: & \  \mu_N (i_1, i_2) = \Gamma_{ b_N, d_N } ( i_1, i_3) \big\} \big| \\
\leq &\big| \big\{ (i_1, \alpha_2, \beta_2) \in [ N^2] \times [ N ]^2 :\  \beta_2 \equiv B_1 - ( \beta_1 -1)N   (\mod d_N) \big\} \big|\\
\leq  & N^3 \cdot ( 1 + \frac{N}{d_N}) = o(N^4).
\end{align*}

Similarly, 
$ \big| \big\{ (i_1, i_2, i_3) \in [ N^2]^3:  \  \mu_N (i_1, i_2) = \Gamma_{ b_N, d_N } ( i_3, i_2) \big\} \big| = o(N^4) $,
so the conclusion follows.
\end{remark}








\bibliographystyle{alpha}

\end{document}